\documentclass[onefignum,onetabnum]{siamart171218}

\usepackage[T1]{fontenc}
\usepackage{lipsum}
\usepackage{amsfonts}
\usepackage{graphicx}
\usepackage{epstopdf}
\usepackage{algorithmic}
\usepackage{amsmath}
\usepackage{colortbl}
\usepackage{booktabs}
\usepackage{enumitem}
\usepackage{multirow}

\makeatletter
\def\refstepcounter@optarg[#1]#2{%
  \cref@old@refstepcounter{#2}%
  \cref@constructprefix{#2}{\cref@result}%
  \@ifundefined{cref@#1@alias}%
    {\def\@tempa{#1}}%
    {\def\@tempa{\csname cref@#1@alias\endcsname}}%
  \protected@edef\cref@currentlabel{%
    [\@tempa][\arabic{#2}][\cref@result]%
    \csname p@#2\endcsname\csname the#2\endcsname}%
}
\makeatother

\ifpdf
  \DeclareGraphicsExtensions{.eps,.pdf,.png,.jpg}
\else
  \DeclareGraphicsExtensions{.eps}
\fi

\newsiamremark{remark}{Remark}
\newsiamremark{hypothesis}{Hypothesis}
\newsiamthm{claim}{Claim}

\crefname{hypothesis}{Hypothesis}{Hypotheses}

\usepackage{amsopn}
\DeclareMathOperator{\diag}{diag}

\makeatletter
\newcommand*{\addFileDependency}[1]{
  \typeout{(#1)}
  \@addtofilelist{#1}
  \IfFileExists{#1}{}{\typeout{No file #1.}}
}
\makeatother

\newcommand*{\myexternaldocument}[1]{%
    \externaldocument{#1}%
    \addFileDependency{#1.tex}%
    \addFileDependency{#1.aux}%
}

\headers{Adaptive Polynomial Filtering }{Hermitian Interior Eigenvalue problems}

\title{Adaptive Polynomial Filtering for Hermitian Interior Eigenproblems: Convergence Analysis
}

\author{
Xiaofei Xu
\and
Yuhui Ni
\and
Shengguo Li
\and
Juan Zhang
}

\ifpdf
\hypersetup{
  pdftitle={      },
  pdfauthor={}
}
\fi

\myexternaldocument{ex_supplement}

\begin{document}

\maketitle


\begin{abstract}
Interior eigenvalue problems for large-scale sparse Hermitian matrices are fundamental in computational science. 
We propose an adaptive polynomial filtering strategy based on Chebyshev expansion of a step function, integrated into a filtered subspace iteration framework. 
We establish pointwise convergence bounds in both undamped and damped settings and incorporate an enhanced spurious eigenvalue detection technique to improve efficiency and robustness. 
At the implementation level, we employ MaSpMM to accelerate the polynomial filtering step. 
Numerical results demonstrate the efficiency and robustness of the proposed method compared with classical approaches.
\end{abstract}

\begin{keywords}
    Subspace iteration, Adaptive polynomial filtering, SpMM, Spurious eigenvalues, Interior eigenvalue problems 
\end{keywords}


\section{Introduction}


Interior eigenvalue problems for large-scale sparse Hermitian matrices arise in many areas of computational science and engineering. A prominent driving application is Kohn--Sham density-functional theory (DFT) \cite{hohenberg1964inhomogeneous,kohn1965self}, in which each step of the self-consistent field iteration requires computing the occupied eigenstates of a discretized Hamiltonian in a neighborhood of the Fermi level. Because these eigenvalues lie in the interior of the spectrum and the Hamiltonian dimension can reach millions, efficient interior eigensolvers are indispensable for large-scale DFT calculations \cite{zhou2006self,liou2020scalable}. Interior eigenvalue problems also appear in quantum many-body and condensed-matter simulations \cite{van2022enhancing}, photonic band-structure and waveguide computations \cite{lyu2025contour}, and 
planetary normal-mode computations \cite{shi2018computing}.
Furthermore, for problems of interest to the full spectrum, the spectrum can be partitioned into slices, with the eigenpairs in each slice computed in parallel.

Given a Hermitian matrix $A \in \mathbb{C}^{n\times n}$ and an interval of interest $[a, b] \subset [\lambda_{\text{min}}, \lambda_{\text{max}}]$, 
let $\{\lambda_j\}_{j=1}^n$ denote the eigenvalues of $A$, with corresponding eigenvectors $v_j$ satisfying $\langle v_i, v_j \rangle = \delta_{ij}$. We consider the \emph{interior eigenvalue problem}
\begin{equation}\label{ori_problem}
    A v_j = \lambda_j v_j,
\end{equation} 
where $\lambda_j \in [a, b] $.
If the target eigenvalues are the first \(e\) (or the last \(e\)) eigenvalues of \(A\), the problem \eqref{ori_problem} reduces to an \emph{extreme eigenvalue problem}, which can be regarded as a special case of the interior eigenvalue problem.

For small- and medium-scale dense matrices, interior eigenvalue problems have been studied extensively. Classical approaches include bisection with inverse iteration \cite{demmel1997applied} and the MRRR algorithm \cite{dhillon2006design}. 
These methods rely on matrix factorizations that incur substantial memory costs and become impractical for large-scale problems. 
Structured variants based on hierarchical low-rank representations, such as HSS-based $LDL^T$ factorizations \cite{xi2014fast}, 
have been proposed to accelerate such computations, but they still require explicit matrix representations. 
For large-scale sparse Hermitian matrices, a variety of algorithms and solvers have been developed in recent years. 
These approaches are generally built upon projection methods combined with spectral transformations that redirect computational effort toward the desired part of the spectrum.

Among projection methods for extreme eigenvalue problems, subspace iteration \cite{bauer1957verfahren,demmel1997applied}, the Implicitly Restarted Arnoldi method (IRAM/ARPACK) \cite{lehoucq1998arpack}, and the LOBPCG method \cite{knyazev2001toward} are widely used. Chebyshev-filtered subspace iteration (ChFSI) \cite{zhou2006self} and its high-performance implementation ChASE \cite{winkelmann2019chase} have proved particularly effective for sequences of Hermitian eigenvalue problems arising, e.g., in self-consistent field iterations. These projection methods can be extended to interior eigenvalue problems through spectral transformations. A classical example is the shift-and-invert transformation, which leads to shift-and-invert Lanczos/Arnoldi methods and shift-and-invert Jacobi--Davidson variants \cite{sleijpen2000jacobi}.

Filtering provides another important class of spectral transformations that maps unwanted spectral components toward zero. 
Among rational filtering approaches, the Sakurai--Sugiura (SS) method \cite{sakurai2003projection} and 
its variants \cite{ikegami2010contour,ikegami2010filter,sakurai2007cirr, xi2016computing} form the basis of the z-Pares software, 
while the FEAST algorithm \cite{polizzi2009density} and its extensions \cite{chen2025interior,gavin2018krylov,gavin2018feast,peter2014feast} constitute the FEAST software package \cite{polizzi2020feast}. 
These contour-integral-based filters offer high accuracy and excellent parallel scalability. 
However, both the shift-and-invert approach and rational filtering require solving linear systems involving $A - \sigma I$, 
which places nontrivial demands on the choice of the shift $\sigma$.

An alternative is polynomial filtering, which, although less accurate in approximation than its rational counterpart, replaces the linear solves with sparse matrix--vector multiplications. Li et al. \cite{li2016thick} employed the Chebyshev series expansion of the Dirac delta function as a filter, integrating it into both the thick-restart Lanczos method and subspace iteration. This strategy is particularly effective for targeting narrow spectral intervals; hence, the algorithm incorporates a spectrum-slicing strategy. These algorithms are implemented in the EVSL software package \cite{li2019eigenvalues}. Similarly, Jia et al. utilized the Chebyshev-Jackson series expansion of a step function as a filter within a subspace iteration framework, leading to the CJ-FEAST algorithm \cite{jia2023feast} for SVD problems. A backward-stable variant of this algorithm was subsequently developed \cite{jia2024augmented}.

In this paper, we propose an adaptive polynomial filter integrated within a subspace iteration (AdaPolySI) framework for solving Hermitian interior eigenvalue problems. 
A preliminary version of this algorithm was presented in \cite{Yuhui2026AdaPolySI}; in this work, we further extend it with several additional contributions. 
In particular, we optimize and accelerate SpMM using Clenshaw's algorithm, provide more comprehensive theoretical analyses, and propose a more efficient criterion for excluding spurious eigenvalues.
Similar to CJ-FEAST, our filter employs a Chebyshev series expansion of a step function; 
however, a key distinction lies in our use of exponent-$m$ Lanczos damping instead of Jackson damping, where the parameter $m$ controls the damping intensity. 
The filter degree adapts during the iteration to balance computational cost and solution accuracy. 
We establish rigorous pointwise convergence bounds for the polynomial filter in both the undamped and damped settings, 
providing a theoretical foundation for the adaptive degree selection strategy. 
At the implementation level, we incorporate implicit deflation and a spurious eigenvalue detection mechanism to accelerate convergence,
and employ MaSpMM \cite{MaSpMM}, a memory-aware SpMM microkernel, to accelerate the dominant sparse matrix--dense matrix multiplication in the filtering step.

For a set of sparse matrices from the SuiteSparse~\cite{TimDavis-Matrix}, 
numerical results demonstrate that AdaPolySI achieves average speedups of $\textbf{14.5}\times$ over EVSL and $\textbf{2.14}\times$ over CJ-FEAST,
with peak speedups of up to $25.9\times$ and $2.39\times$, respectively.
We also evaluate the effectiveness of the proposed adaptive strategy and analyze the sensitivity of the involved parameters, 
as detailed in Section~\ref{sec:Numerical_experiments}.


The remainder of this paper is organized as follows. Section~\ref{Preliminaries} reviews the filtered subspace iteration framework and polynomial filtering. Section~\ref{Accuracy of the filter} develops the proposed adaptive filter and establishes its convergence properties. Section~\ref{Implementations} describes algorithmic implementation details, including deflation, spurious eigenvalue detection, and MaSpMM. Numerical experiments are reported in Section~\ref{sec:Numerical_experiments}. Finally, Section~\ref{Conclusions} summarizes our contributions and discusses future directions.

For a matrix $X \in \mathbb{C}^{m \times n}$, let $X_{i:j, k:l}$ denote the submatrix consisting of rows $i$ through $j$ and columns $k$ through $l$, and let $X^*$, $X^T$, $\text{nnz}(X)$, and $\kappa(X)$ denote its conjugate transpose, transpose, the number of non-zero entries, and condition number, respectively. 
$|\mathcal{I}|$ and \(\mathcal{I}^c\) represent the cardinality and complement of a set $\mathcal{I}$, respectively.
For a subset \( \mathcal{J} \subset \mathcal{I} \), \( \mathcal{I} \setminus \mathcal{J} \) denotes the complement of \( \mathcal{J} \) with respect to \( \mathcal{I} \).


\section{Preliminaries} \label{Preliminaries}
\subsection{Filtered Subspace Iteration framework}
\label{sec:main}


The modern Subspace Iteration (SI) framework has gone through a longer period of development.
A limitation of subspace iteration methods is their inability to target eigen-pairs in arbitrary intervals of interest, i.e. solving the interior eigenvalue problem \eqref{ori_problem}. Filtering techniques \cite{saad2011numerical} provide an effective solution. The technique constructs a function $\rho(x)$ that maps target eigenvalues of A to dominant eigenvalues of $\rho(A)$, while suppressing non-target eigenvalues to near zero. By replacing $A$ with $\rho(A)$ in subspace iterations, we can approximate the invariant subspace corresponding to the desired eigenvalues. As far as we know, existing filters fall into two primary types -- rational filters and polynomial filters. Our algorithm employs the latter approach, detailed in Section \ref{Filtering}.

\begin{algorithm}[!h]
	\caption{Filtered Subspace iteration for Hermitian eigenproblems.}
	\label{alg:SI}
	\renewcommand{\algorithmicrequire}{\textbf{Input:}}
	\renewcommand{\algorithmicensure}{\textbf{Output:}}
	\begin{algorithmic}[1]
		\REQUIRE Hermitian matrix $A$, 
                 filtering function $\rho(x)$, 
                 interval of interest $[a, b]$ and
                 the matrix $\hat V^{(i)}$.
		\ENSURE Computed eigen-pairs ($\hat\lambda_i, \hat v_i$), $\hat\lambda_i \in [a, b].$ 
        \STATE{$i = 1$}
		\WHILE{not converge}
		\STATE{$\hat X^{(i)} = \rho(A) \hat V^{(i-1)}$}
		\STATE{$Q^{(i)}$ = $\text{orth}(\hat X^{(i)})$}
        \STATE{[$\hat V^{(i)}$, $\hat\Lambda^{(i)}$] = \text{Rayleigh-Ritz}($A$,  $Q^{(i)}$)}  
        \STATE{$\hat\lambda_i = (\hat\Lambda^{(i)})_{i,i}$, $\hat v_i = (\hat V^{(i)})_{:,i}$}
        \STATE{$i = i + 1$}
		\ENDWHILE
        \STATE \textbf{function} Rayleigh-Ritz($A, Q$)
            \STATE{$B = Q^* A Q$}
            \STATE{Compute eigen-decomposition $B = \hat U \hat\Lambda \hat U^*$}
    	    \STATE{$\hat V = Q\hat U$}
            \STATE \textbf{return} $\hat V, \hat \Lambda$
        \STATE \textbf{end function}
	\end{algorithmic}
\end{algorithm}

The filtered subspace iteration (FSI) framework for Hermitian eigenvalue problem pseudo-code is shown in Algorithm \ref{alg:SI}, where it is assumed that a suitable filtering function $\rho(x)$ has already been established. The columns of the $n \times p$ matrix $\hat V^{(0)}$ form the initial orthonormal basis. Convergence of the FSI algorithm requires $p \ge e$, where $e$ is the number of eigenvalues in the interval of interest \cite{peter2014feast}. 

The main computational overhead of FSI comes from line 3. This step requires repeated multiplication of \(A\) by a block vector \(X\). As a result, the number of matrix-vector products (MVs) serves as a measure of the computational cost of FSI. For sparse \(A\), the computational complexity is \(\mathcal{O}(\mathrm{nnz}(A)) \cdot p\), and the operation can be efficiently implemented using Level~3 BLAS routines.

The QR decomposition in line 4 ensures orthogonality of the subspace basis. We denote by orth($X$) the orthonormal matrix $Q$ from the QR factorization. In line 5, Rayleigh-Ritz process projects $A$ onto a smaller matrix $B$ whose eigenvalues approximate some eigenvalues of $A$. We compute all eigen-pairs of $B$ using state-of-the-art Hermitian eigensolvers such as QR iteration or divide-and-conquer \cite{demmel1997applied}, whose high-performance implementations are available in LAPACK \cite{anderson1999lapack}.

To analyze the convergence of the FSI algorithm, we consider the following decomposition
\begin{equation*}
    A = V\Lambda V^* = V_p \Lambda_p V_p^* + V_{p,\perp} \Lambda_{p, \perp} V_{p,\perp}^*,
\end{equation*}
where
\begin{align*}
    V &= \left[V_p,\ V_{p,\perp}\right],\quad 
    \Lambda = 
    \begin{bmatrix}
    \Lambda_p & 0\\
    0 & \Lambda_{p, \perp}
    \end{bmatrix},\\
    V_p &= \left[ v_1, v_2,\cdots,v_e, \cdots, v_p\right],\quad  V_{p, \perp} = \left[ v_{p+1}, \cdots,v_n\right],\\
    \Lambda_p &= \diag(\lambda_1,\cdots,\lambda_e,\cdots,\lambda_p),\quad \Lambda_{p, \perp} = \diag(\lambda_{p+1},\cdots,\lambda_n).
\end{align*}
The filter is allowed to vary across iterations and is denoted by \(\rho^{(i)}(x)\) at iteration \(i\). Let $\gamma_j^{(i)} = \rho^{(i)}(\lambda_j)$ for $j = 1, \ldots, n$ be decreasingly ordered. Define
\[
    \Gamma_p^{(i)} = \diag\left(\gamma_1^{(i)}, \cdots, \gamma_e^{(i)},\cdots,\gamma_p^{(i)}\right), \quad 
    \Gamma_{p,\perp}^{(i)} = \diag\left(\gamma_{p+1}^{(i)}, \gamma_{p+2}^{(i)}, \cdots, \gamma_n^{(i)}\right).
\]
Let $\left(\hat\Lambda^{(i)}, \hat V^{(i)}\right)$, 
where 
\[
    \hat V^{(i)} = \left[ \hat v_1^{(i)},\cdots,\hat v_p^{(i)} \right]  \in \mathbb{C}^{n\times p}, \quad \hat\Lambda^{(i)} = \diag\left( \hat \lambda_1^{(i)},\cdots,\hat \lambda_p^{(i)} \right)   \in \mathbb{R}^{p\times p},
\]
be the approximate eigen-pairs obtained at the $i$-th iteration. 
The convergence properties of the filtered subspace iteration are reviewed in the following lemma.
\begin{lemma}\label{lem:convergence}{\rm \cite{jia2023feast,saad2011numerical}}
Assume $|\gamma_e^{(i)}| > |\gamma_{p+1}^{(i)}|$ and that $V_p^* \hat{V}^{(0)}$ is nonsingular.
Define  
\[
E^{(0)} = \left(V_{p,\perp}^* \hat{V}^{(0)}\right)\left(V_p^* \hat{V}^{(0)}\right)^{-1}, \quad E^{(i)} = \Gamma_{p,\perp}^{(i)} E^{(i-1)} \left(\Gamma_p^{(i)}\right)^{-1}.
\]
Then, the following statements hold:
\begin{enumerate}
    \item Let \( \epsilon^{(i)} = \mathrm{dist}\left\{\mathrm{span}\{V_p\}, \mathrm{span}\{\hat{V}^{(i)}\}\right\} \) denote the distance between the subspaces spanned by \( V_p \) and \( \hat{V}^{(i)} \).Then,
    \[
    \left\|E^{(i)}\right\| \le \left(\frac{\gamma_{p+1}^{(i)}}{\gamma_p^{(i)}}\right) \left\|E^{(i-1)}\right\|, \quad 
    \epsilon^{(i)} \le \prod_{j=1}^i\left(\frac{\gamma_{p+1}^{(j)}}{\gamma_p^{(j)}} \right) \left\|E^{(0)}\right\|.
    \]
    \item Let \( \beta^{(i)} = \left\|\rho^{(i)}(A)A(I - \rho^{(i)}(A))\right\| \). Assume the eigenvalues of \( A \) in \([a, b]\) are all simple. Define the spectral gap
    \[
    \mathrm{gap}_l = \min_{j \neq l} \left|\lambda_l - \lambda_j\right|, \quad l = 1, \dots, e.
    \]  
    Then, for each \( l \),
    \[
    \begin{aligned}
    \sin \angle \left(\hat{v}_l^{(i)}, v_l\right) &\leq \sqrt{1 + \frac{\left(\beta^{(i)}\right)^2}{\left(\mathrm{gap}_l\right)^2}} \cdot \prod_{j = 1}^i\left(\frac{\gamma^{(j)}_{p+1}}{\gamma^{(j)}_l}\right) \left\|E^{(0)}\right\|, \\
    \left|\hat{\lambda}_l^{(i)} - \lambda_l\right| &\leq \|A - \lambda_l I\| \sin^2 \angle \left(\hat{v}_l^{(i)}, v_l\right).
    \end{aligned}
    \]  
\end{enumerate}
\end{lemma}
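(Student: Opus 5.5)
The plan is the standard subspace-iteration argument in the eigenbasis of $A$: write each iterate as a ``graph'' over $\mathrm{span}\{V_p\}$ and track the off-block part $E^{(i)}$.

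\textbf{Step 1: the graph representation.} Since the filter is a function of $A$, we have $\rho^{(i)}(A)=V\,\diag(\Gamma_p^{(i)},\Gamma_{p,\perp}^{(i)})V^*$. Orthonormalization and Rayleigh--Ritz only change the basis of $\mathrm{span}\{\hat V^{(i)}\}$, not the subspace. Hence, by induction,
\[
\mathrm{span}\{\hat V^{(i)}\}=\mathrm{span}\bigl\{V_p+V_{p,\perp}E^{(i)}\bigr\},
\]
where $E^{(i)}=\Gamma_{p,\perp}^{(i)}E^{(i-1)}(\Gamma_p^{(i)})^{-1}$.
\begin{itemize}
\item \emph{Base case.} $\hat V^{(0)}=V_pV_p^*\hat V^{(0)}+V_{p,\perp}V_{p,\perp}^*\hat V^{(0)}$. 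Right-multiplying by the nonsingular matrix $(V_p^*\hat V^{(0)})^{-1}$ gives $V_p+V_{p,\perp}E^{(0)}$.
\item \emph{Inductive step.} Applying the filter gives $V_p\Gamma_p^{(i)}+V_{p,\perp}\Gamma_{p,\perp}^{(i)}E^{(i-1)}$. Right-multiplying by $(\Gamma_p^{(i)})^{-1}$ yields the recursion.
\end{itemize}
Invertibility of $\Gamma_p^{(i)}$ follows from $|\gamma_p^{(i)}|\ge|\gamma_e^{(i)}|>|\gamma_{p+1}^{(i)}|\ge 0$, using the decreasing ordering. (Here I read the hypothesis so that $\gamma_p^{(i)}\neq0$, i.e.\ the ordering is by magnitude.)

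\textbf{Step 2: the subspace distance.} Taking norms in the recursion gives $\|E^{(i)}\|\le(|\gamma_{p+1}^{(i)}|/|\gamma_p^{(i)}|)\|E^{(i-1)}\|$, because $\|\Gamma_{p,\perp}^{(i)}\|=|\gamma_{p+1}^{(i)}|$ and $\|(\Gamma_p^{(i)})^{-1}\|=1/|\gamma_p^{(i)}|$. For the distance, the orthonormal basis of $\mathrm{span}\{V_p+V_{p,\perp}E\}$ yields $\sin\Theta=E(I+E^*E)^{-1/2}$ in the CS sense, so
\[
\epsilon^{(i)}=\frac{\|E^{(i)}\|}{\sqrt{1+\|E^{(i)}\|^2}}\le\|E^{(i)}\|.
\]
Unrolling the recursion gives the product bound in item 1.

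\textbf{Step 3: individual eigenvectors.} This is the main obstacle. First bound the distance from $v_l$ to the subspace. Take the $l$-th column of the graph basis, which up to scaling is
\[
\Bigl(\prod_j\gamma_l^{(j)}\Bigr)\bigl(v_l+V_{p,\perp}\tilde E_{:,l}\bigr).
\]
Tracking the column through the recursion, its off-block part is controlled by $\prod_j|\gamma_{p+1}^{(j)}/\gamma_l^{(j)}|\,\|E^{(0)}\|$, since only column $l$ of $E^{(0)}$ is scaled by $1/\gamma_l^{(j)}$. Hence $\sin\angle(v_l,\mathrm{span}\{\hat V^{(i)}\})$ is bounded by this product.

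Then convert subspace proximity into Ritz-vector proximity using the Saad-type bound for Rayleigh--Ritz:
\[
\sin\angle(\hat v_l,v_l)\le\sqrt{1+\frac{\|P_{\mathcal Q}A(I-P_{\mathcal Q})\|^2}{\delta^2}}\;\sin\angle(v_l,\mathcal Q),
\]
where $\mathcal Q$ is the current subspace and $\delta$ is the separation between $\lambda_l$ and the other Ritz values. I would cite this from \cite{saad2011numerical}. The step I expect to be delicate is justifying two replacements:
\begin{itemize}
\item $\delta$ by $\mathrm{gap}_l$, which needs the simplicity assumption and converged neighbouring Ritz values;
\item $\|P_{\mathcal Q}A(I-P_{\mathcal Q})\|$ by $\beta^{(i)}=\|\rho^{(i)}(A)A(I-\rho^{(i)}(A))\|$.
\end{itemize}
The second replacement is meant to follow from treating $\rho^{(i)}(A)$ as an approximate spectral projector onto $\mathcal Q$. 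I would follow the argument of \cite{jia2023feast} for it rather than rederive it.

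\textbf{Step 4: the eigenvalue bound.} Write $\hat v=\cos\theta\,v_l+\sin\theta\,w$ with unit $w\perp v_l$. Then
\[
\hat\lambda_l-\lambda_l=\hat v^*(A-\lambda_lI)\hat v=\sin^2\theta\;w^*(A-\lambda_lI)w,
\]
whose modulus is at most $\|A-\lambda_lI\|\sin^2\theta$. This is routine.
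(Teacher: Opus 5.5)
The paper gives no proof of this lemma; it only cites \cite{jia2023feast,saad2011numerical}. Your Steps 1, 2 and 4 are the standard argument and are fine (apart from a slip noted at the end). So is the first half of Step 3, $\sin\angle(v_l,\mathcal Q)\le\|E^{(i)}e_l\|\le\prod_j|\gamma_{p+1}^{(j)}/\gamma_l^{(j)}|\,\|E^{(0)}\|$.

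\textbf{The gap.} The two replacements you defer to \cite{jia2023feast} are not merely delicate: they are false, so item 2 as written here cannot be proved by any route. Saad's bound, which you quote correctly, uses $\|P_{\mathcal Q}A(I-P_{\mathcal Q})\|$, where $P_{\mathcal Q}$ is the orthogonal projector onto $\mathrm{span}\{\hat V^{(i)}\}$. It also uses the separation $\delta$ between $\lambda_l$ and the \emph{Ritz values} other than $\hat\lambda_l^{(i)}$.
\begin{itemize}
\item $\delta$ is not controlled by $\mathrm{gap}_l$. A Ritz value built from eigenvectors on both sides of $\lambda_l$ (a spurious Ritz value, as in the paper's own discussion) can lie arbitrarily close to $\lambda_l$.
\item $\rho^{(i)}(A)$ is not a projector. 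Hence $\|\rho^{(i)}(A)A(I-\rho^{(i)}(A))\|$ changes when $A$ and the filter are shifted together, while all iterates, angles and gaps stay the same.
\end{itemize}

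\textbf{A counterexample.} Take
\begin{itemize}
\item $A=\diag(0,-1,1,5)$, so $v_j=e_j$;
\item $[a,b]=[-\tfrac12,\tfrac12]$, so $e=1$, and $p=2$;
\item a fixed filter with $\rho(0)=1$, $\rho(\pm1)=\tfrac12$, $\rho(5)=0$;
\item $\mathrm{span}\{\hat V^{(0)}\}=\mathrm{span}\{e_1+e_3,\,e_2+e_3\}$.
\end{itemize}
All hypotheses hold: $|\gamma_1|=1>|\gamma_3|=\tfrac12$, $\gamma_2=\tfrac12\neq0$, $\|E^{(0)}\|=\sqrt2$, $\mathrm{gap}_1=1$ and $\beta^{(i)}=\tfrac14$.

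The iterates satisfy $\mathrm{span}\{\hat V^{(i)}\}=\mathrm{span}\{e_1+2^{-i}e_3,\,e_2+e_3\}$. The Ritz values are $\pm2^{-i}/\sqrt{2+4^{-i}}$, and the two Ritz vectors tend to $\bigl(e_1\pm(e_2+e_3)/\sqrt2\bigr)/\sqrt2$. Hence $\sin\angle(\hat v_1^{(i)},v_1)\to1/\sqrt2$, already about $0.71$ at $i=3$. The claimed bound $\sqrt{17/16}\,2^{-i}\sqrt2$ tends to $0$.

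Each replacement fails separately here: $\delta\approx2^{-i}/\sqrt2\ll\mathrm{gap}_1$, and $\|P_{\mathcal Q}A(I-P_{\mathcal Q})\|\to1>\beta^{(i)}$. Given Saad's bound, what your Step 3 actually proves is the inequality with $\|P_{\mathcal Q}A(I-P_{\mathcal Q})\|$ and $\delta$ in place of $\beta^{(i)}$ and $\mathrm{gap}_l$. A version with $\mathrm{gap}_l$ needs an extra hypothesis that keeps the other Ritz values away from $\lambda_l$.

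\textbf{A smaller slip in Step 1.} Since $e\le p$ and the $\gamma_j^{(i)}$ are decreasing, $|\gamma_e^{(i)}|\ge|\gamma_p^{(i)}|$, not the reverse. So the hypothesis does not give $\gamma_p^{(i)}\neq0$. You must take two things as implicit assumptions, both already presupposed by the definition of $E^{(i)}$ and the ratio $\gamma_{p+1}^{(i)}/\gamma_p^{(i)}$:
\begin{itemize}
\item $\Gamma_p^{(i)}$ is nonsingular;
\item the $\lambda_j$ have the same ordering at every iteration, so that $V_p$ is fixed.
\end{itemize}
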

 
Lemma \ref{lem:convergence} shows that the error term \( \|E^{(i)}\| \) at the \( i \)-th iteration is governed by a contraction factor \( |\gamma_{p+1} / \gamma_p| \), which primarily depends on the gap between the \( p \)-th and \( (p+1) \)-th eigenvalues of \( A \). 
Furthermore, it extends this analysis by introducing a contraction factor \( |\gamma_{p+1} / \gamma_l| \) to control the convergence of the \( l \)-th eigenpair.



\subsection{Polynomial Filtering}\label{Filtering}


The idea of the filter is to construct a function $s(x)$, such that values outside the region of interest are close to 0. 
For interior eigenvalue problem \eqref{ori_problem}, a suitable choice is the step function~\cite{jia2023feast} 
\begin{equation}\label{step_func}
    s(x) = \begin{cases}
            1,& x\in (a,b),\\
            \frac{1}{2},& x\in\{a, b\},\\
            0,& x\in [\lambda_{\text{min}},\lambda_{\text{max}}] \setminus [a,b].
            \end{cases}
\end{equation}
Note that $s(A)$ is a spectral projector corresponding to the eigenvalues of interest. 
Without loss of generality, let $[\lambda_{\text{min}},\lambda_{\text{max}}] = [-1, 1]$. 
This ensures that the Chebyshev expansion 
\[
s(x)=\lim_{k \rightarrow\infty} \rho_k(x) =\lim_{k \rightarrow\infty} \sum_{j=0}^{k} c_jT_j(x),
\]
where the Fourier coefficients are given by
\begin{equation}\label{c_j}
    c_j=\begin{cases}\frac1\pi(\arccos(a)-\arccos(b)),& j=0,\\
    \frac2\pi(\frac{\sin(j\arccos(a))-\sin(j\arccos(b))}j),& j>0,
    \end{cases}
\end{equation}
and $T_j(x)$ denotes the $j$-th Chebyshev polynomial.
In the general case, we replace $A$ with $l(A)$, where the linear transformation is defined as
\begin{equation*}
    l(x) = \frac{2x - \lambda_n - \lambda_1}{\lambda_n - \lambda_1} = l_1 x + l_2.
\end{equation*} 

As $k \to \infty$, the sequence $\rho_k(x)$ converges pointwise to $s(x)$. Theorem \ref{theo:fourier_error_bound} provides an upper bound on the convergence. However, the truncated series suffers from Gibbs oscillations at the discontinuities \cite{di2016efficient}. To suppress these oscillations, we introduce damping factors $d_{j,k}$, which depend on the polynomial degree $k$ and the index $j$. Incorporating these factors yields the damped expansion
\begin{equation*}
    \rho^D_k(x)=\sum_{j=0}^kc_jd_{j,k}T_j(x).
\end{equation*}
For example, for $j > 0$, the Jackson damping factors \cite{rivlin1981introduction, jia2023feast} are
\begin{equation*}
    d^J_{j,k}=\frac{(k+2-j)\sin(\frac{\pi}{k+2})\cos(\frac{j\pi}{k+2})+\cos(\frac{\pi}{k+2})\sin(\frac{j\pi}{k+2})}{(k+2)\sin\frac{\pi}{k+2}}.
\end{equation*} 
A simpler form, introduced by Lanczos~\cite{lanczos1988applied}, is given by
\begin{equation*}
    d^L_{j,k} = \frac{\sin\left(\frac{j\pi}{k+1}\right)}{\frac{j\pi}{k+1}},
\end{equation*}
which are known as the Lanczos damping factors or $\sigma$-factors \cite{lanczos1988applied}. 
The generalized version (exponent-$m$ Lanczos damping factors) is defined as
\begin{equation}\label{m_Lanczos_damping}
d^m_{j,k} = \left(\frac{\sin\left(\frac{j\pi}{k+1}\right)}{\frac{j\pi}{k+1}}\right)^m,
\end{equation}
where $m \ge 0$. The strength of the damping can be controlled by adjusting $m$.

In our implementation, the polynomial filter coefficients are precomputed during the setup phase. 
The product $\rho_k^D(l(A)) \cdot X$ in line 3 of Algorithm \ref{alg:SI} is evaluated implicitly using Clenshaw's algorithm. 
Utilizing the backward recurrence relation of Chebyshev polynomials, Clenshaw's algorithm ensures backward stability \cite{smoktunowicz2002backward} 
and offers superior memory efficiency compared to standard forward accumulation. 
The details of the algorithm can be found in Algorithm~\ref{alg:Compute_filter}.




\begin{algorithm}[!h]
	\caption{Clenshaw's algorithm for computing $\rho^D_k(l(A)) \cdot X$.}
	\label{alg:Compute_filter}
	\renewcommand{\algorithmicrequire}{\textbf{Input:}}
	\renewcommand{\algorithmicensure}{\textbf{Output:}}
	\begin{algorithmic}[1]
		\REQUIRE Hermitian matrix $A$, polynomial coefficients $\{a_{j}\}_{j=0}^{k+1}$, 
        linear transformation factors $\{l_1, l_2\}$ and matrix $X$.
		\ENSURE Matrix $V$.
        \STATE{$Y = X$}
        \STATE{$W = a_{k} Y$}
        \STATE{$V = 2 l_1 A W + (2 l_2 + a_{k-1} / a_{k}) W$}
        \STATE{Swap $V$ and $W$}
        \FOR{$j = k-2, \dots, 1$}
            \STATE{$V = 2 l_1 A W + 2 l_2 W - V + a_j Y$}
            \STATE{Swap $V$ and $W$}
        \ENDFOR
        \STATE{$V = l_1 A W + l_2 W - V + a_0 Y$}
	\end{algorithmic}
\end{algorithm}

\section{FSI with Adaptive Polynomial Filtering}\label{Adaptive Polynomial Filtering}

In this section, we propose an adaptive polynomial filter and integrate it into the FSI framework. 
For Chebyshev-series polynomial filtering, the two key parameters are the polynomial degree and the damping factors. 
A higher degree improves filter accuracy at the expense of more MVs per iteration; 
the required degree grows as the interval of interest $[a,b]$ narrows relative to $[\lambda_{\min},\lambda_{\max}]$ or as eigenvalues cluster near the endpoints $a$ and $b$ (see Theorem~\ref{theo:spectral_proj_error_bound}). 
To mitigate Gibbs oscillations \cite{di2016efficient}, we employ the exponent-$m$ Lanczos damping \eqref{m_Lanczos_damping}.

A common practice is to fix the filter before the iteration; an alternative is to update it adaptively \cite{winkelmann2019chase,zhang2024chebyshev}. A representative adaptive method is ChASE \cite{winkelmann2019chase}, which adjusts the polynomial degree per basis vector for extremal eigenproblems. For interior eigenproblems, however, the filter must approximate an indicator function of a spectral interval, and this strategy does not directly extend. 

\subsection{Adaptive Polynomial Filter}

We develop a new adaptive filter based on the Chebyshev expansion of a step function. This filter is suitable for most projection-based solvers for interior eigenvalue problems, such as the FSI method (Algorithm \ref{alg:SI}) and the Thick-Restart Lanczos method (TRLan) \cite{li2016thick}. 
To concretely demonstrate this adaptive mechanism, we implement it in the FSI framework. Further details are given in Subsection~\ref{Implementations}.



In iteration $i$, we choose the filter function 
\begin{equation}\label{AdaPolySI}
    \rho^m_{k^{(i)}, k}(x) = \sum_{j=0}^{k^{(i)}} c_j d^m_{j,k} T_j(l(x)),\quad k^{(i)} \le k,
\end{equation}
where \(k\) denotes the maximum polynomial degree selected during the setup phase, and \(k^{(i)}\) denotes the degree used at iteration \(i\). The Fourier coefficients $\{c_j\}$ and damping factors $\{d^m_{j,k}\}$ are computed once for the chosen $k$. 

By Lemma \ref{lem:convergence}, the filter \( P^{(i)} := \rho^m_{k^{(i)}, k}(A) \) should be designed to ensure \( |\gamma_{p+1}^{(i)} / \gamma_e^{(i)}| \) to be sufficiently small. However, this requires prior knowledge of the exact eigenvalues \( \lambda_e \) and \( \lambda_{p+1} \) of \( A \), which are generally unavailable in advance. In our algorithm, we propose an approximate approach using the Ritz values \( \hat{\lambda}_p^{(i-1)} \) and \( \hat{\lambda}_{e^{(i-1)}}^{(i-1)} \) obtained from iteration $i-1$, where $e^{(i-1)}$ denotes the number of Ritz values in the interval of interest at iteration $i-1$. Specifically, the filter update criterion is set to
\[
\left|\frac{\rho_{k^{(i-1)}, k}^m(\hat{\lambda}_p^{(i-1)})}{\rho_{k^{(i-1)}, k}^m(\hat{\lambda}_{e^{(i-1)}}^{(i-1)})}\right| < \tau_a,
\]
where $\tau_a$ denotes the adaptive threshold.

In practice, the polynomial degree \( k^{(i)} \) is typically chosen to be smaller than the initial degree \( k \). The specific strategy for selecting \( k^{(i)} \) dynamically is detailed in Algorithm \ref{alg:Determine_degree}. By iteratively adjusting \( k^{(i)} \), the algorithm maintains a lower computational cost while ensuring the filter’s effectiveness in suppressing non-target eigenvalues. 

\begin{algorithm}[!h]
	\caption{Compute the polynomial degree of the filter at the $i$-th iteration.}
        \label{alg:Determine_degree}
	\renewcommand{\algorithmicrequire}{\textbf{Input:}}
	\renewcommand{\algorithmicensure}{\textbf{Output:}}
	\begin{algorithmic}[1]
		\REQUIRE Ritz values $\hat\lambda^{(i-1)} = [\hat\lambda_1^{(i-1)}, \cdots, \hat\lambda_p^{(i-1)}]$, estimated eigenvalue count $e^{(i-1)}$ in $[a,b]$, polynomial coefficients $\{a_{j}\}_{j=0}^{k}$, linear transformation $l(x)$, threshold $\tau_a$
		\ENSURE Polynomial degree $k^{(i)}$ for the next iteration filter
            \STATE{$h_0 = 1$}
            \STATE{$h_1 = l(\hat\lambda^{(i-1)})$}
            \STATE{$\hat{\gamma}^{(i-1)} = a_{0} h_0$}
            \FOR{ $j = 1, 2, \cdots, k$ }
                \STATE{$\hat{\gamma}^{(i-1)} := \hat{\gamma}^{(i-1)} + a_j h_1$} 
                \STATE{Sort $\hat{\gamma}^{(i-1)}$ descending such that $\left|\hat{\gamma}_1^{(i-1)}\right| \geq \cdots \geq \left|\hat{\gamma}_p^{(i-1)}\right|$}
                \IF{ $|\hat{\gamma}_p^{(i-1)} / \hat{\gamma}_{e^{(i)}}^{(i-1)}| < \tau_a$ }
                    \STATE{$k^{(i)} = j$}
                    \STATE{\textbf{return} $k^{(i)}$}
                \ENDIF
                \STATE{$h_{\text{next}} = 2 l(\hat\lambda^{(i-1)})\cdot h_1 - h_0$} 
                \STATE{$h_0 = h_1$; $h_1 = h_{\text{next}}$}
            \ENDFOR
	\end{algorithmic}
\end{algorithm}

\subsection{Accuracy of the filter} \label{Accuracy of the filter}

By Lemma \ref{lem:convergence}, a sufficient condition for convergence of the filtered subspace iteration is that $|\gamma^{(i)}_{e}| > |\gamma^{(i)}_{p+1}|$ at each iteration $i$.
To identify the quantities influencing convergence, we derive a pointwise convergence bound for the filtering function.

Let $g^m_{k^{(i)}, k}(\theta) = \rho^m_{k^{(i)}, k}(\cos\theta)$ and $g(\theta) = \rho(\cos\theta)$. Since $T_j(x) = \cos(j \arccos x)$, we have
\begin{equation}\label{Fourier_series_damping}
g^m_{k^{(i)}, k}(\theta) = \sum_{j=0}^{k^{(i)}} c_j d^m_{j,k} \cos(j\theta), \quad \theta \in [0, \pi].
\end{equation}

Let $\alpha=\arccos a$ and $\beta=\arccos b$. Then $g^m_{k^{(i)}, k}(\theta)$ is the damped partial sum of the Fourier series of $g(\theta)$, defined as
\begin{equation*}
g(\theta) = \begin{cases}
1, & \theta\in (\beta, \alpha),\\
\frac{1}{2}, & \theta\in\{\alpha, \beta\},\\
0, & \theta\in [0,\pi]\setminus [\beta, \alpha].
\end{cases}
\end{equation*}

The following lemma provides a pointwise bound for the Fourier partial sums of \(\frac{\pi-x}{2}\).
\begin{lemma} \label{lem:Rk_bound}
    Let $S_k(x) = \sum_{j=1}^k \frac{\sin(jx)}{j}$ be the $k$-th partial sum of the Fourier series of $f(x) = \frac{\pi - x}{2}$ for $x \in (0, 2\pi)$, and $R_k(x) = \frac{\pi - x}{2} - S_k(x)$. For any $k \ge 1$ and $x \in (0, \pi]$, we have
    \begin{equation*} 
        |R_k(x)| \le \frac{1}{(k+1)\sin(x/2)}.
    \end{equation*}
\end{lemma}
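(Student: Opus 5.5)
We write the remainder as a tail and bound it by Abel summation (summation by parts) against the conjugate Dirichlet kernel. Since the Fourier series of $\frac{\pi-x}{2}$ converges pointwise on $(0,2\pi)$, we have for $x\in(0,\pi]$
\[
R_k(x) = \sum_{j=k+1}^{\infty} \frac{\sin(jx)}{j},
\]
understood as the limit of partial sums up to $N\to\infty$. Introduce the partial sums of sines $D_n(x) = \sum_{j=1}^{n} \sin(jx)$, for which the classical closed form
\[
D_n(x) = \frac{\cos(x/2) - \cos\bigl((n+\tfrac12)x\bigr)}{2\sin(x/2)}
\]
gives the uniform estimate $|D_n(x)-D_k(x)| = \bigl|\cos((k+\tfrac12)x)-\cos((n+\tfrac12)x)\bigr|/(2\sin(x/2)) \le 1/\sin(x/2)$ for all $n\ge k$.

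Next, apply summation by parts to $\sum_{j=k+1}^{N} \frac{1}{j}\bigl(D_j - D_{j-1}\bigr)$, using the shifted sums $\tilde D_j := D_j - D_k$ (so $\tilde D_k=0$). This yields
\[
\sum_{j=k+1}^{N} \frac{\sin(jx)}{j} = \frac{\tilde D_N(x)}{N} + \sum_{j=k+1}^{N-1} \tilde D_j(x)\Bigl(\frac1j - \frac1{j+1}\Bigr).
\]
Because the weights $\frac1j-\frac1{j+1}$ are positive and telescope, the triangle inequality with $|\tilde D_j|\le 1/\sin(x/2)$ bounds the right-hand side by
\[
\frac{1}{\sin(x/2)}\Bigl(\frac1N + \frac1{k+1} - \frac1N\Bigr) = \frac{1}{(k+1)\sin(x/2)}.
\]
Letting $N\to\infty$ gives the claimed bound $|R_k(x)| \le \frac{1}{(k+1)\sin(x/2)}$.

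The only delicate point is the constant. A naive use of $|D_n|\le 1/\sin(x/2)$ without shifting by $D_k$ would lose a factor $2$ from the boundary term $-D_k/(k+1)$. Shifting by $D_k$ avoids this: it removes that boundary term and keeps the bound $1/\sin(x/2)$ (rather than $1/(2\sin(x/2))$ times two) on the differences. The restriction $x\in(0,\pi]$ ensures $\sin(x/2)>0$; the argument in fact works on all of $(0,2\pi)$.
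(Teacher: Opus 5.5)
Your proof is correct and follows essentially the same route as the paper: Abel summation of the tail against the closed form of $\sum_{m\le j}\sin(mx)$, with the telescoping weights $\frac1j-\frac1{j+1}$ producing the factor $\frac{1}{k+1}$. The only difference is bookkeeping. You subtract $D_k$ before summing by parts, so the boundary term and the constant $\cos(x/2)$ disappear at once. The paper instead keeps them, lets $N\to\infty$, and then cancels the $\cos(x/2)$ contributions using $\sum_{j\ge k+1}\frac{1}{j(j+1)}=\frac{1}{k+1}$, reaching the same bound $\frac{2}{k+1}$ for $2\sin(x/2)\,|R_k(x)|$.
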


\begin{proof}
    Applying Abel's summation formula,
    we obtain
    \[
        \sum_{j=k+1}^N \frac{\sin(jx)}{j} = \frac{A_N(x)}{N} - \frac{A_k(x)}{k+1} + \sum_{j = k+1}^{N-1} A_j(x) \left( \frac{1}{j} - \frac{1}{j+1} \right),
    \]
    where 
    \[
        A_j(x) = \sum_{m=1}^j \sin(mx) = \frac{\cos(x/2) - \cos((j+1/2)x)}{2\sin(x/2)}.
    \]
    Taking the limit as $N \to \infty$ leads to
    \[
    \begin{aligned} 
        2\sin(x/2) R_k(x)
        &= - \frac{A_k(x)}{k+1} + \sum_{j=k+1}^{\infty} \frac{A_j(x)}{j(j+1)} \\
        &= - \frac{\cos(x/2) - \cos((k+1/2)x)}{k+1} + \sum_{j=k+1}^{\infty} \frac{\cos(x/2) - \cos((j+1/2)x)}{j(j+1)} \\ 
        &= \frac{\cos((k+1/2)x)}{k+1} - \sum_{j=k+1}^\infty \frac{\cos((j+1/2)x)}{j(j+1)}.
    \end{aligned}
    \]
    Taking absolute values, we obtain
    \[
        2\sin(x/2) |R_k(x)| \le \frac{1}{k+1} + \sum_{j=k+1}^\infty \frac{1}{j(j+1)} = \frac{2}{k+1},
    \]
    which implies $|R_k(x)| \le \frac{1}{(k+1)\sin(x/2)}$.

\end{proof}

When $d^m_{j,k}=1$ for $j=0,\dots,k$, \eqref{Fourier_series_damping} reduces to the undamped Fourier partial sum of $g(\theta)$, denoted by $g_k(\theta)$. We obtain the following bound.

\begin{theorem}\label{theo:fourier_error_bound}

    Let $g_k(\theta) = \sum_{j=0}^k c_j \cos(j\theta)$ be the $k$-th partial sum of the Fourier series of $g(\theta)$. For any $k \ge 1$ and $\theta \in [0, \pi]$, we have
    \begin{equation}\label{gk_bound}
        |g(\theta) - g_k(\theta)| \le \frac{J(\theta)}{\pi (k+1)},
    \end{equation}
    where
    \begin{equation}\label{J_theta}
        J(\theta) =
        \begin{cases} 
        \frac{1}{|\sin \frac{\theta + \alpha}{2}|} + \frac{1}{|\sin \frac{\theta - \alpha}{2}|} + \frac{1}{|\sin \frac{\theta + \beta}{2}|} + \frac{1}{|\sin \frac{\theta - \beta}{2}|}, & \theta\neq\alpha,\beta,\\
        \frac{1}{|\sin\alpha|} 
        + 
        \frac{1}{\bigl|\sin\frac{\alpha+\beta}{2}\bigr|} 
        + 
        \frac{1}{\bigl|\sin\frac{\alpha-\beta}{2}\bigr|}, 
        & \theta=\alpha,\\
        \frac{1}{|\sin\beta|} 
        + 
        \frac{1}{\bigl|\sin\frac{\alpha+\beta}{2}\bigr|} 
        + 
        \frac{1}{\bigl|\sin\frac{\alpha-\beta}{2}\bigr|}, 
        & \theta=\beta. 
        \end{cases}
    \end{equation}
\end{theorem}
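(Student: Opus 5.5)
We reduce the remainder $g-g_k$ to the remainders $R_k$ of Lemma~\ref{lem:Rk_bound} via product-to-sum formulas.

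\emph{Step 1: the remainder as a combination of $R_k$.} By \eqref{c_j} with $a=\cos\alpha$ and $b=\cos\beta$, we have $c_0=(\alpha-\beta)/\pi$ and $c_j=\frac{2}{\pi j}(\sin j\alpha-\sin j\beta)$ for $j\ge1$. The identity $2\sin(j\alpha)\cos(j\theta)=\sin(j(\alpha+\theta))+\sin(j(\alpha-\theta))$ then gives
\[
g_k(\theta)=\frac{\alpha-\beta}{\pi}+\frac{1}{\pi}\Bigl[S_k(\alpha+\theta)+S_k(\alpha-\theta)-S_k(\beta+\theta)-S_k(\beta-\theta)\Bigr].
\]
Since $g_k\to g$ pointwise (Dirichlet--Jordan, with value $\tfrac12$ at the jumps), letting $k\to\infty$ in the same formula with each $S_k$ replaced by its limit expresses $g(\theta)$ through the values of $\frac{\pi-x}{2}$. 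Here each argument must first be reduced to $(0,2\pi)$ using oddness and $2\pi$-periodicity of $S_k$, and a zero argument contributes $0$. Subtracting the two expressions yields
\[
g(\theta)-g_k(\theta)=\frac{1}{\pi}\Bigl[\pm R_k(x_1)\pm R_k(x_2)\pm R_k(x_3)\pm R_k(x_4)\Bigr],
\]
where each $x_i$ is the reduced representative of $\theta\pm\alpha$ or $\theta\pm\beta$, and a term is absent whenever its argument is $\equiv 0 \pmod{2\pi}$.

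\emph{Step 2: apply Lemma~\ref{lem:Rk_bound}.} The lemma is stated for $x\in(0,\pi]$. Since $R_k$ is odd and $2\pi$-periodic on $(0,2\pi)$, with $R_k(2\pi-x)=-R_k(x)$, the bound extends to $|R_k(x)|\le \frac{1}{(k+1)|\sin(x/2)|}$ for every $x\not\equiv0 \pmod{2\pi}$. This step needs care: the folding $x\mapsto 2\pi-x$ must preserve $|\sin(x/2)|$, which it does, and so does a sign change of $x$. For $\theta\ne\alpha,\beta$ all four arguments $\theta\pm\alpha,\theta\pm\beta$ lie in $(-2\pi,2\pi)\setminus\{0\}$. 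The exception is $\theta+\alpha=2\pi$ or $\theta+\beta=0$, which can occur only at the endpoints $\theta=\alpha=\pi$ or $\theta=\beta=0$, i.e.\ $a=-1$ or $b=1$; these need a remark, and there the corresponding term simply vanishes. The triangle inequality then gives \eqref{gk_bound} with the first line of \eqref{J_theta}.

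\emph{Step 3: the cases $\theta=\alpha$ and $\theta=\beta$.} At $\theta=\alpha$ the term with argument $\theta-\alpha=0$ drops out, because $S_k(0)=0$ and the symmetric limit is $0$. This is consistent with $g(\alpha)=\tfrac12$. The remaining arguments are $2\alpha$, $\alpha+\beta$ and $\alpha-\beta$, so $|\sin(2\alpha/2)|=|\sin\alpha|$ produces exactly the second line of \eqref{J_theta}. The case $\theta=\beta$ is symmetric.

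The main obstacle is the bookkeeping in Step 1: verifying that the limit representation reproduces exactly $g(\theta)$, including the value $\tfrac12$ at the jumps and the $0/1$ values elsewhere. This requires checking the branches of $\frac{\pi-x}{2}$ after reducing each argument to $(0,2\pi)$. I would avoid that computation entirely by invoking pointwise convergence of the Fourier series of the piecewise-smooth $g$ and writing $g-g_k$ as the tail sum. The tail is then the same linear combination of the tails $\sum_{j>k}\sin(jx)/j=R_k(x)$, and no explicit evaluation of the limit is needed.
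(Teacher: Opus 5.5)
Your proposal is correct and follows essentially the same route as the paper. You expand $c_j\cos(j\theta)$ by product-to-sum formulas, write $g-g_k$ as a signed sum of the four tails $\sum_{j>k}\sin(jx)/j$ at $x=\theta\pm\alpha,\theta\pm\beta$, and bound each tail by Lemma~\ref{lem:Rk_bound}. Extending the lemma to arguments outside $(0,\pi]$ via oddness and $2\pi$-periodicity, and dropping the zero-argument term at $\theta=\alpha,\beta$, fills in details the paper leaves implicit.

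One small correction to your Step~2 remark. If $\alpha=\pi$ (resp.\ $\beta=0$), the even periodic extension of $g$ has no jump at $\theta=\pi$ (resp.\ $\theta=0$). The series therefore converges to $1$ there rather than to $g=\tfrac12$, so $g-g_k$ is not the tail and the term does not ``simply vanish'' in the way you describe. This is harmless, because $J=\infty$ at that point through the $1/|\sin\alpha|$ (resp.\ $1/|\sin\beta|$) term, so the bound holds trivially.
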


\begin{proof}

Since 
\[
  \begin{aligned}
        g(\theta) - g_k(\theta) 
        &=  \frac{1}{\pi} \bigg(
        \sum_{j=k+1}^{\infty} \frac{\sin j(\theta+\alpha)}{j} 
        - \sum_{j=k+1}^{\infty} \frac{\sin j(\theta-\alpha)}{j} \\
        &\qquad - \sum_{j=k+1}^{\infty} \frac{\sin j(\theta+\beta)}{j} 
        + \sum_{j=k+1}^{\infty} \frac{\sin j(\theta-\beta)}{j}\bigg)\\
        &\le \frac{1}{\pi}\left(|R_k(\theta+\alpha)| + |R_k(\theta-\alpha)| + |R_k(\theta+\beta)| + |R_k(\theta-\beta)|\right).
    \end{aligned}
\]
The conclusion follows naturally from Lemma \ref{lem:Rk_bound}.


\end{proof}

\begin{remark}\label{rem:J_theta}
    Note that, for $\theta \neq \alpha, \beta$,
    \[
        J(\theta) \le \frac{4\pi}{d(\theta)},  
    \]
    where $d(\theta) = \min\{|\theta \pm \alpha|, |\theta \pm \beta|\}$. Therefore, when $\theta$ is far from the discontinuities $\alpha$ and $\beta$, $J(\theta)$ is small, leading to a tighter bound in \eqref{gk_bound}. Conversely, as $\theta$ approaches $\alpha$ or $\beta$, $J(\theta)$ increases, indicating a looser bound. This behavior reflects the Gibbs phenomenon, where oscillations occur near discontinuities in the partial sums of a Fourier series.
\end{remark}

The following theorem gives a pointwise bound for the damped Fourier expansion \eqref{Fourier_series_damping}.
\begin{theorem}\label{theo:damped_fourier_error_bound}
Let $g^m_{k^{(i)},k}(\theta)$ be the damped Fourier partial sum defined in \eqref{Fourier_series_damping}. For any $k \ge 1$ and $\theta \in [0, \pi]$, we have
\begin{equation}\label{eq:damped_fourier_error_bound}
    |g(\theta) - g^m_{k^{(i)},k}(\theta)| \le \frac{d_{k^{(i)},k}^m\,J(\theta)}{\pi(k^{(i)}+1)} + \frac{m\mathcal{C}_m\,J(\theta)}{k+1} + \frac{m\pi(\pi-\theta)}{3(k + 1)^2},
\end{equation}
where $1 \le k^{(i)} \le k$, $m > 0$, $d_{k^{(i)},k}^m$ is defined in \eqref{m_Lanczos_damping}, $J(\theta)$ is defined in \eqref{J_theta}, and
\begin{equation}\label{C_m}
    \mathcal{C}_m := \int_0^{\pi} \frac{(\sin u)^{m-1}(\sin u - u\cos u)}{u^{m+2}}\,du.
\end{equation}
\end{theorem}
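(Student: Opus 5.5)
Write $\delta_j:=d^m_{j,k}$ for $0\le j\le k$, so $\delta_0=1$ and $\delta_j$ is nonincreasing in $j$ on $[0,k]$ because $\sin t/t$ decreases on $[0,\pi)$. Define the tail sums
\[
r_j(\theta):=g(\theta)-g_{j-1}(\theta)=\sum_{l\ge j} c_l\cos(l\theta).
\]
The plan is to handle the damping weights by Abel summation and then use Theorem~\ref{theo:fourier_error_bound} on each tail. Summation by parts gives
\[
g(\theta)-g^m_{k^{(i)},k}(\theta)=\sum_{j=0}^{k^{(i)}}c_j(1-\delta_j)\cos(j\theta)+r_{k^{(i)}+1}(\theta).
\]
The first sum equals
\[
\sum_{j=1}^{k^{(i)}}(\delta_{j-1}-\delta_j)\bigl(r_j(\theta)-r_{k^{(i)}+1}(\theta)\bigr).
\]
The $r_{k^{(i)}+1}$ pieces telescope to $-(1-\delta_{k^{(i)}})r_{k^{(i)}+1}$. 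Combining this with the last term gives the identity
\[
g-g^m_{k^{(i)},k}=\delta_{k^{(i)}}\,r_{k^{(i)}+1}(\theta)+\sum_{j=1}^{k^{(i)}}(\delta_{j-1}-\delta_j)\,r_j(\theta).
\]
Theorem~\ref{theo:fourier_error_bound} gives $|r_{k^{(i)}+1}|\le J(\theta)/(\pi(k^{(i)}+1))$. Multiplying by $\delta_{k^{(i)}}=d^m_{k^{(i)},k}$ yields exactly the first term of \eqref{eq:damped_fourier_error_bound}.

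For the sum, apply Theorem~\ref{theo:fourier_error_bound} to each tail, $|r_j|\le J(\theta)/(\pi j)$ for $j\ge 2$. Since the increments $\delta_{j-1}-\delta_j\ge0$, this bounds the sum by
\[
\frac{J(\theta)}{\pi}\sum_{j}\frac{\delta_{j-1}-\delta_j}{j}.
\]
Put $h(t)=(\sin t/t)^m$, $t_j=j\pi/(k+1)$ and $\Delta=\pi/(k+1)$, so that $\delta_j=h(t_j)$. Then
\[
\frac{\delta_{j-1}-\delta_j}{j}=\frac{\Delta}{t_j}\int_{t_{j-1}}^{t_j}(-h'(u))\,du\le \Delta\int_{t_{j-1}}^{t_j}\frac{-h'(u)}{u}\,du .
\]
Summing and extending the integral to $[0,\pi]$ gives at most $\Delta\int_0^\pi (-h'(u))/u\,du$. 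A direct computation shows
\[
-h'(u)=m\,\frac{(\sin u)^{m-1}(\sin u-u\cos u)}{u^{m+1}},
\]
so this bound is $\Delta\, m\,\mathcal{C}_m$. Including the prefactor $J(\theta)/\pi$, the total is $mJ(\theta)\mathcal{C}_m/(k+1)$, which is the second term.

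The $j=1$ term needs separate treatment, because $r_1=g-c_0$ is not controlled by Theorem~\ref{theo:fourier_error_bound}, which requires $k\ge1$. Here I would not use a $J$-bound. I would estimate the increment directly, $1-\delta_1=1-(\sin\Delta/\Delta)^m\le m\Delta^2/6$, and use $|r_1(\theta)|\le \max(c_0,1-c_0)$. If that does not give the stated form, the fallback is to bound the $j=1$ increment through the explicit Fourier series of $(\pi-\theta)/2$, whose relevant piece is linear in $\pi-\theta$. Either route should give a contribution of order $m(\pi-\theta)\pi/(3(k+1)^2)$, matching the third term.

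The delicate points are the monotonicity and integral-comparison step, with care about $u=t_j$ versus $u$ in the denominator, and producing precisely the $(\pi-\theta)$ factor in the third term. That factor suggests the paper writes $c_j\cos j\theta$ via the sawtooth functions of Lemma~\ref{lem:Rk_bound}, so the exact form may require tracking $R_0(x)=(\pi-x)/2$ rather than a crude bound.
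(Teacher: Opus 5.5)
\textbf{There is a gap at the $j=1$ term.} Your identity $g-g^m_{k^{(i)},k}=\delta_{k^{(i)}}r_{k^{(i)}+1}+\sum_{j=1}^{k^{(i)}}(\delta_{j-1}-\delta_j)r_j$ is correct, and so is the integral comparison. Together they are the paper's summation-by-parts argument: the paper applies it to $E(x)$ at $x=\theta\pm\alpha,\theta\pm\beta$, while you apply it to $g$ directly. So the first two terms are fine.

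The trouble is the term $(1-\delta_1)r_1(\theta)$ with $r_1=g-c_0$. Neither of your routes yields the third term, and no estimate of this term on its own can. For $\alpha<\pi$ we have $g(\pi)=0$, so $r_1(\pi)=-c_0\neq 0$. The term is therefore nonzero at and near $\theta=\pi$, where $m\pi(\pi-\theta)/(3(k+1)^2)$ vanishes.

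Your first route gives at most $m\pi^2/(6(k+1)^2)$, which is dominated by the third term only for $\theta\le\pi/2$. It also relies on $1-(\sin\Delta/\Delta)^m\le m\Delta^2/6$. Bernoulli's inequality gives this only for $m\ge 1$, and it actually fails for small $m$: take $m=0.1$, $k=5$, where the left side is about $4.60\times 10^{-3}$ and the right side about $4.57\times 10^{-3}$.

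Your fallback is exactly what the paper does. It writes the term as $(1-d^m_{1,k})R_0(x)$, uses $R_0(x)=(\pi-x)/2$, and sums to $2(\pi-\theta)$. But $(\pi-x)/2$ is not an upper bound for $|R_0(x)|$ in two cases:
\begin{itemize}
\item when $x=\theta-\alpha<0$, where $R_0(x)=-(\pi+x)/2$;
\item when $x=\theta+\alpha>\pi$, where $(\pi-x)/2$ is negative.
\end{itemize}
So following the paper here does not close the gap either.

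\textbf{The fix is to stop treating $j=1$ separately.} First, $|r_1(\theta)|\le J(\theta)/\pi$ holds:
\begin{itemize}
\item off the jumps, $|g-c_0|\le 1$, while $J(\theta)\ge 4$ because each of its four terms is at least $1$;
\item at $\theta\in\{\alpha,\beta\}$, $|g-c_0|=|\tfrac12-c_0|\le\tfrac12$, while $J\ge 3$.
\end{itemize}
Equivalently, the Abel-summation proof of Lemma~\ref{lem:Rk_bound} goes through verbatim for $k=0$, since $A_0\equiv 0$, so Theorem~\ref{theo:fourier_error_bound} extends to $k=0$.

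Second, your own comparison with $t_0=0$ gives $\delta_0-\delta_1\le\Delta\int_0^{\Delta}(-h'(u))/u\,du$. Summing over $j=1,\dots,k^{(i)}$ then yields $|g-g^m_{k^{(i)},k}|\le d^m_{k^{(i)},k}J(\theta)/(\pi(k^{(i)}+1))+m\mathcal{C}_mJ(\theta)/(k+1)$. The third term in the statement is nonnegative on $[0,\pi]$, so this proves the theorem, in a slightly stronger form. It also avoids both problematic inequalities.
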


\begin{proof}

Using \eqref{c_j} and the identity $\sin u\cos v=\tfrac12\bigl(\sin(u+v)+\sin(u-v)\bigr)$, we can rewrite
\begin{equation}\label{eq:damped_error_split}
\begin{aligned}
g(\theta) - g^m_{k^{(i)},k}(\theta)
&= \sum_{j=1}^{k^{(i)}} \bigl(1-d^m_{j,k}\bigr) c_j \cos(j\theta)
+ \sum_{j=k^{(i)}+1}^{\infty} c_j \cos(j\theta)\\
&= \frac{1}{\pi}\bigl(E(\theta+\alpha) - E(\theta-\alpha) - E(\theta+\beta) + E(\theta-\beta)\bigr),
\end{aligned}
\end{equation}
where
\begin{equation}\label{eq:def_E}
E(x) := \sum_{j=1}^{k^{(i)}} \bigl(1-d^m_{j,k}\bigr)\frac{\sin(jx)}{j} + \sum_{j=k^{(i)}+1}^{\infty} \frac{\sin(jx)}{j}.
\end{equation}

Let $b_j(x):=\sin(jx)/j$. Then
\[
R_\ell(x):=\sum_{m=\ell+1}^{\infty} b_m(x)=\sum_{m=\ell+1}^{\infty}\frac{\sin(mx)}{m}
\]
and $b_j(x)=R_{j-1}(x)-R_j(x)$. \eqref{eq:def_E} can be rewritten as
\begin{equation}\label{eq:E_sbp}
E(x)=\bigl(1-d^m_{1,k}\bigr)R_0(x)
+\sum_{j=1}^{k^{(i)}-1}\bigl(d^m_{j,k}-d^m_{j+1,k}\bigr)R_j(x)
+d^m_{k^{(i)},k}R_{k^{(i)}}(x).
\end{equation}

For $x\in(0,2\pi)$, we have $R_0(x)=\sum_{m=1}^{\infty}\sin(mx)/m=(\pi-x)/2$. Moreover, by Lemma~\ref{lem:Rk_bound}, for all $j\ge 1$,
\[
|R_j(x)|\le \frac{1}{(j+1)|\sin(x/2)|}.
\]
Taking absolute values in \eqref{eq:E_sbp} gives
\begin{equation}\label{eq:E_bound_1}
|E(x)| \le \frac{(1-d^m_{1,k})(\pi-x)}{2}
+\frac{1}{|\sin(x/2)|}\sum_{j=1}^{k^{(i)}-1}\frac{d^m_{j,k}-d^m_{j+1,k}}{j+1}
+\frac{d^m_{k^{(i)},k}}{(k^{(i)}+1)|\sin(x/2)|}.
\end{equation}

Let $h:=\pi/(k+1)$. For the first term, using $\sin y\ge y-y^3/6$ for $y\in(0,\pi)$ and Bernoulli's inequality, we obtain
\begin{equation}\label{eq:d1_small}
1-d^m_{1,k}=1-\left(\frac{\sin h}{h}\right)^m \le \frac{m h^2}{6}.
\end{equation}


To bound the middle term, define $f(t):=\bigl(\sin(th)/(th)\bigr)^m$ for $t>0$, so that $d^m_{j,k}=f(j)$.
Since $f$ is positive and decreasing on $(0,\pi/h)$, we have
$f(j)-f(j+1)=\int_j^{j+1}|f'(t)|\,dt$, and the bound $\frac{1}{j+1}\le\frac{1}{t}$ for $t\in[j,j+1]$ gives
\begin{equation}\label{eq:d_diff_sum}
\sum_{j=1}^{k^{(i)}-1}\frac{d^m_{j,k}-d^m_{j+1,k}}{j+1}
\le \int_1^{k^{(i)}}\frac{|f'(t)|}{t}\,dt.
\end{equation}
Differentiating yields
\[
f'(t)= m\left(\frac{\sin(th)}{th}\right)^{m-1}\frac{th\cos(th)-\sin(th)}{t^2 h},
\]
so that
\[
\frac{|f'(t)|}{t}= m\,\frac{(\sin(th))^{m-1}\bigl(\sin(th)-th\cos(th)\bigr)}{(th)^{m+1}\,t\,h}.
\]
Substituting $u=th$ into \eqref{eq:d_diff_sum}, we obtain
\begin{equation}\label{eq:d_diff_integral}
\int_1^{k^{(i)}}\frac{|f'(t)|}{t}\,dt
= m\int_{h}^{k^{(i)}h}\frac{(\sin u)^{m-1}(\sin u-u\cos u)}{u^{m+2}}\,du
\le m\,\mathcal{C}_m,
\end{equation}
where $\mathcal{C}_m$ is defined in \eqref{C_m}. 
The integrand is non-negative and integrable on $(0,\pi)$.

Substituting \eqref{eq:d1_small} and \eqref{eq:d_diff_integral} into \eqref{eq:E_bound_1} and recalling $h=\pi/(k+1)$ yields
\begin{equation*} 
|E(x)| \le \frac{m\pi^2(\pi-x)}{12(k+1)^2}
+\frac{m\pi\,\mathcal{C}_m}{(k+1)|\sin(x/2)|}
+\frac{d^m_{k^{(i)},k}}{(k^{(i)}+1)|\sin(x/2)|}.
\end{equation*}

Finally, by \eqref{eq:damped_error_split} and the triangle inequality, we obtain
\[
\begin{aligned}
|g(\theta)-g^m_{k^{(i)},k}(\theta)|
&\le \frac{1}{\pi}\Bigl(|E(\theta+\alpha)| + |E(\theta-\alpha)| + |E(\theta+\beta)| + |E(\theta-\beta)|\Bigr)\\
&\le \frac{d^m_{k^{(i)},k}J(\theta)}{\pi(k^{(i)}+1)}
+\frac{m\mathcal{C}_m\,J(\theta)}{k+1}
+\frac{m\pi(\pi-\theta)}{3(k+1)^2}.
\end{aligned}
\]
\end{proof}

Note that, when $m=0$, both $m\mathcal{C}_m$ and $m\pi(\pi-\theta)/3$ vanish and $d^0_{k^{(i)},k}=1$, so \eqref{eq:damped_fourier_error_bound} reduces to the undamped bound \eqref{gk_bound}. Moreover, for fixed $m>0$, the bound \eqref{eq:damped_fourier_error_bound} is $O(J(\theta)/(k^{(i)}+1))$ uniformly in $k^{(i)}\le k$.

We illustrate the bound \eqref{eq:damped_fourier_error_bound} with a simple example. Let $a = 0.1$, $b = 0.6$, $m = 0.5$, and $k = 2.5k^{(i)}$. In the left panel of Fig.~\ref{fig:bound_verification}, we fix $x=\cos\theta=0.3$ and increase $k^{(i)}$; in the right panel, we fix $k=100$ and vary $x$ from $-0.1$ to $0.3$, comparing the error with the corresponding bound.

For $x:=\cos\theta=0.3$, as $k^{(i)}$ increases, the upper envelope of the error and the bound in \eqref{eq:damped_fourier_error_bound} become nearly parallel and remain of the same order of magnitude. This indicates that the estimate \eqref{eq:damped_fourier_error_bound} accurately tracks the decay of the error. For smaller $k^{(i)}$, the error is more strongly affected by the term involving $J(\theta)$; by Remark \ref{rem:J_theta}, this contribution is of order $d(\theta)^{-1}$. Thus, for modest degrees, the error is governed primarily by the distance from the evaluation point to the discontinuity, consistent with the Gibbs phenomenon. 
The right panel also illustrates that the error increases as $x$ approaches the discontinuity point $a$, consistent with the bound \eqref{eq:damped_fourier_error_bound}.

\begin{figure}[htbp]
    \centering
    \includegraphics[width=1\linewidth]{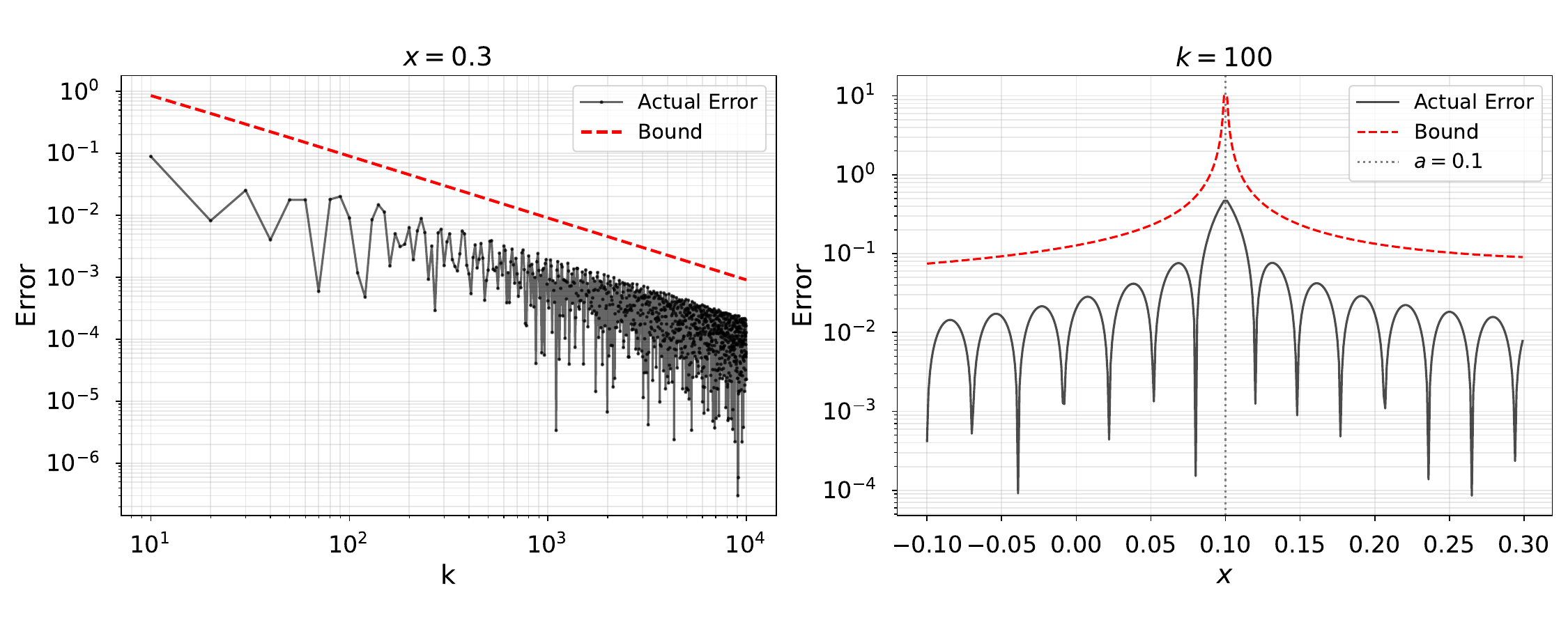}
        \caption{Errors and error bounds for \eqref{eq:damped_fourier_error_bound}.}
    \label{fig:bound_verification}
\end{figure}

By Theorem~\ref{theo:damped_fourier_error_bound}, we further obtain a bound on $\|P - P^{(i)}\|_2$.
\begin{theorem}\label{theo:spectral_proj_error_bound}

Let $P = s(A)$ be a spectral projector onto the invariant subspace associated with the eigenvalues of $A$ in $[a, b]$, and let $P^{(i)} = \rho^m_{k^{(i)}, k}(A)$ be the approximate spectral projector constructed at iteration $i$. Then, for any $1 \le k^{(i)} \le k$, we have
\begin{equation}\label{spectral_proj_error_bound}
    \|P - P^{(i)}\|_2 \le \frac{d_{k^{(i)},k}^m\,J^*}{\pi(k^{(i)}+1)} + \frac{m\mathcal{C}_m\,J^*}{k+1} + \frac{m\pi^2}{3(k + 1)^2},   
\end{equation}
where $J^* = \max_{j=1,\cdots,n} J(l(\arccos\lambda_j))$, $J(\phi)$ is defined in \eqref{J_theta}, and $\mathcal{C}_m$ is defined in \eqref{C_m}.
\end{theorem}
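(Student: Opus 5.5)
Since $A$ is Hermitian with orthonormal eigenvectors, both $P=s(A)$ and $P^{(i)}=\rho^m_{k^{(i)},k}(A)$ are functions of $A$ diagonalized by the same unitary $V$. Their difference is $V\,\diag\bigl(s(\lambda_j)-\rho^m_{k^{(i)},k}(\lambda_j)\bigr)V^*$. The plan is therefore to reduce the operator-norm bound to a scalar bound at the eigenvalues:
\[
\|P-P^{(i)}\|_2=\max_{j=1,\dots,n}\bigl|s(\lambda_j)-\rho^m_{k^{(i)},k}(\lambda_j)\bigr|.
\]
Here the step function and the filter are understood in the scaled variable, i.e.\ $s$ and the Chebyshev sum are evaluated at $l(\lambda_j)\in[-1,1]$. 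This is consistent with the definition \eqref{AdaPolySI}, where $T_j(l(x))$ appears, and with the normalization $[\lambda_{\min},\lambda_{\max}]\mapsto[-1,1]$.

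For each $j$ we set $\theta_j=\arccos(l(\lambda_j))\in[0,\pi]$, so that $s(\lambda_j)=g(\theta_j)$ and $\rho^m_{k^{(i)},k}(\lambda_j)=g^m_{k^{(i)},k}(\theta_j)$ by \eqref{Fourier_series_damping}. Theorem~\ref{theo:damped_fourier_error_bound} then applies pointwise at $\theta_j$ and gives
\[
|s(\lambda_j)-\rho^m_{k^{(i)},k}(\lambda_j)|\le \frac{d^m_{k^{(i)},k}J(\theta_j)}{\pi(k^{(i)}+1)}+\frac{m\mathcal{C}_m J(\theta_j)}{k+1}+\frac{m\pi(\pi-\theta_j)}{3(k+1)^2}.
\]
We bound $J(\theta_j)\le J^*$, reading the paper's $J(l(\arccos\lambda_j))$ as $J$ evaluated at the angle corresponding to $\lambda_j$, namely $\theta_j$. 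We also bound $\pi-\theta_j\le\pi$, since $\theta_j\ge0$. Every coefficient ($d^m_{k^{(i)},k}$, $m\mathcal{C}_m$, $m$) is nonnegative and independent of $j$, so taking the maximum over $j$ yields \eqref{spectral_proj_error_bound}.

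The only delicate point is bookkeeping rather than analysis. One must match the endpoint convention $s=\tfrac12$ at $a,b$ with $g=\tfrac12$ at $\alpha,\beta$, which the definitions already do, together with the special cases of $J$ at $\theta=\alpha,\beta$ in \eqref{J_theta}. One must also check that the composition with $l$ is handled consistently: $a,b$ are either taken in scaled coordinates or replaced by $l(a),l(b)$ in $\alpha,\beta$. No new estimate is needed beyond Theorem~\ref{theo:damped_fourier_error_bound}.
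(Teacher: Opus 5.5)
Your proposal is correct and follows the paper's proof: diagonalize $P-P^{(i)}$ in the common eigenbasis, reduce $\|P-P^{(i)}\|_2$ to $\max_j|g(\theta_j)-g^m_{k^{(i)},k}(\theta_j)|$, apply Theorem~\ref{theo:damped_fourier_error_bound} pointwise, then use $J(\theta_j)\le J^*$ and $\pi-\theta_j\le\pi$. Your reading $\theta_j=\arccos(l(\lambda_j))$ is the right one, since the paper's $l(\arccos\lambda_j)$ has the composition reversed; also, for $m=0$ you should cite Theorem~\ref{theo:fourier_error_bound} instead, because Theorem~\ref{theo:damped_fourier_error_bound} assumes $m>0$.
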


\begin{proof}
    By Theorem~\ref{theo:damped_fourier_error_bound}, we have
    \[
    \begin{aligned}
        \|P - P^{(i)}\|_2 
        &= \|\rho(A) - \rho^m_{k^{(i)}, k}(A)\|_2\\
        &= \max_{j=1,\cdots,n} | \rho(\lambda_j) - \rho^m_{k^{(i)}, k}(\lambda_j) |\\
        &= \max_{j=1,\cdots,n} | g(\phi_j) - g^m_{k^{(i)}, k}(\phi_j) |\\
        &\le \frac{d_{k^{(i)},k}^m\,J^*}{\pi(k^{(i)}+1)} + \frac{m\mathcal{C}_m\,J^*}{k+1} + \frac{m\pi^2}{3(k + 1)^2},
    \end{aligned}
    \]
    where $\phi_i = l(\arccos\lambda_i)$. 
    The conclusion follows directly.
\end{proof}

By the definition of the spectral projector, the eigenvalues of $P$ belong to $\{1,0\}$, with the additional value $1/2$ only in the exceptional case where $a$ or $b$ is an eigenvalue of $A$. Therefore, if $a$ and $b$ are not eigenvalues of $A$ and $\|P-P^{(i)}\|_2\le \tfrac12$, then
\[
    |\gamma_1^{(i)}| \ge \cdots \ge |\gamma_{e^{(i)}}^{(i)}| > \frac12 > |\gamma_{e^{(i)} + 1}^{(i)}| \ge \cdots \ge |\gamma_n^{(i)}|,
\]
and hence it suffices to enforce
\[
\frac{d_{k^{(i)},k}^m\,J^*}{\pi(k^{(i)}+1)} + \frac{m\mathcal{C}_m\,J^*}{k+1} + \frac{m\pi^2}{3(k + 1)^2} \le \frac12.
\]

In practice, $k$ is chosen large enough so that the last two terms in \eqref{spectral_proj_error_bound} are negligible, and the adaptive degree $k^{(i)}\le k$ is kept as small as possible. Since $0 < d_{k^{(i)},k}^m \le 1$, the first term on the left-hand side of \eqref{spectral_proj_error_bound} dominates. A sufficient condition for $\|P - P^{(i)}\|_2 < \tfrac12$ is therefore
\[
\frac{J^*}{\pi(k^{(i)}+1)} < \frac12,
\quad\text{or}\quad
k^{(i)} > \frac{2J^*}{\pi} - 1.
\]

The quantity \(J^*\) depends on the spectrum and is not available a priori. By definition \eqref{J_theta}, \(J(\theta)\) is bounded by the reciprocal of the distance from \(\theta\) to the nearest discontinuity (cf.\ Remark~\ref{rem:J_theta}); consequently, \(J^*\) scales as \(e/(\alpha-\beta)\) when the \(e\) eigenvalues inside \([a,b]\) are roughly uniformly distributed. Motivated by this scaling, we set the initial degree as
\begin{equation}\label{compute_k_1}
    k^{(1)} = \left\lceil\frac{C}{\alpha - \beta}\right\rceil - 1,
\end{equation}
where $C > \alpha - \beta$ is a user-specified constant. For $i > 1$, \(k^{(i)}\) is refined using the spectral information gathered during the iteration via Algorithm~\ref{alg:Determine_degree}.




\subsection{Algorithm and Implementation Details}\label{Implementations}

This subsection presents the proposed improvements to the algorithmic framework and discusses several implementation details. The pseudo-code for the full version of the algorithm of this paper is shown in Algorithm \ref{alg:ACJFEAST}.

\begin{algorithm}[!htbp]
	\caption{FSI with adaptive polynomial filtering for Hermitian eigenproblems.}
	\label{alg:ACJFEAST}
	\renewcommand{\algorithmicrequire}{\textbf{Input:}}
	\renewcommand{\algorithmicensure}{\textbf{Output:}}
	\begin{algorithmic}[1]
		\REQUIRE The $n$-by-$n$ Hermitian matrix $A$, the initial degree of Chebyshev filter $k,~k^{(1)}$, the interval of interest $[a, b]$ and the thresholds $\tau_a,~\tau_c$.
		\ENSURE The eigen-pairs ($\Lambda, V$)  of matrix $A$ in the interval $[a, b]$. 

        \STATE\label{ACJFEAST-Compute_CJ_cof}{Compute $\{c_{j}\}_{j=0}^k,~\{d_{j,k}\}_{j=0}^k,~\{l_1,l_2\}$ and $p$.}
        \STATE\label{ACJFEAST-init}{Select an n-by-p order orthogonal matrix $\hat V^{(0)}$. Set $\Lambda = [\ ],\ V = [\ ],\ n_{\text{lock}} = 0,\ n_{\text{check}} = 0$.}
		\FOR{$i = 1, 2,\cdots$}
		\STATE\label{ACJFEAST-Approximate_basis_mat}{Filtering (using Algorithm \ref{alg:Compute_filter}): $\hat X^{(i)} = \rho^m_{k^{(i)},k}(A) \hat V^{(i-1)}$.}
		\STATE\label{ACJFEAST-orth}{Orthogonalization (using Cholesky QR \cite{yamazaki2015mixed}): $ \hat Q^{(i)} = \text{orth}([V, \hat X^{(i)}])$. }
        \STATE{Set $Q^{(i)} = \hat Q^{(i)}(:,\ 1:p-n_{\text{lock}})$.}
        \STATE\label{ACJFEAST-RR}{[$\hat V^{(i)}$, $\hat\Lambda^{(i)}$] = \text{Rayleigh-Ritz}($A$,  $Q^{(i)}$).}
        \STATE{\(\mathcal{I}^{(i)} = \left\{ j : \hat\Lambda^{(i)}_{j,j} \in [a,b]\right\},~e^{(i)} = \left|\mathcal{I}^{(i)}\right|\).}
        \IF{$e^{(i)} $ equal to 0}
        \STATE{Proceed to the next iteration.}
        \ENDIF
        \STATE\label{ACJFEAST-adaptive_degree}{Obtain the filter degree $k^{(i+1)}$ using Algorithm \ref{alg:Determine_degree}.}
        \STATE\label{ACJFEAST-compute_res}{Compute the residuals of the eigenpairs indexed by \(\mathcal{I}^{(i)}\), denoted as $\|r_j\|_2,\ j = 1,\cdots, e^{(i)}$.}
        \STATE\label{ACJFEAST-check}{Compute $\tau_s^{(i)}$ by \eqref{tau_s}.}
        \STATE\label{ACJFEAST-check2}{\(\mathcal{C}^{(i)} = \left\{ j\in\mathcal{I}^{(i)} : \|r_j\|_2 < \tau_s^{(i)}\right\}\), $n_{\text{check}}^{(i)} = |\mathcal{C}^{(i)}|,~
        n_{\text{check}}^{(i)} += n_{\text{lock}}$.}
        \IF{$n_{\text{check}}^{(i)} > 0$ and equal to $n_{\text{check}}^{(i-1)}$}
        \STATE{$\mathcal{I}^{(i)} = \mathcal{C}^{(i)}$.}
        \ENDIF
        \STATE\label{ACJFEAST-lock1}{\(\mathcal{L}^{(i)} = \left\{ j\in\mathcal{I}^{(i)} : \|r_j\|_2 < \tau_c\right\},\) $n_{\text{lock}}^{(i)} = |\mathcal{L}^{(i)}|$.}
        \STATE\label{ACJFEAST-lqock2}{$\Lambda := \text{diag}(\Lambda,~\hat\Lambda^{(i)}_{(\mathcal{L}^{(i)},~\mathcal{L}^{(i)})}),\ V := [V,~V^{(i)}_{(:,~\mathcal{L}^{(i)})}]$.}
        \STATE\label{ACJFEAST-lock3}{$\hat V^{(i)} := \hat V^{(i)}_{(:,~(\mathcal{L}^{(i)})^c)},~n_{\text{lock}} += n_{\text{lock}}^{(i)}$.}
        \IF{$\mathcal{I}^{(i)}\setminus \mathcal{L}^{(i)}$ is empty}
        \STATE{Return $(\Lambda, V)$.}
        \ENDIF
		\ENDFOR  
	\end{algorithmic}
\end{algorithm}




\begin{algorithm}[!h]
\caption{MaSpMM Algorithm~\cite{MaSpMM}}
\label{algo:mespmm}
\renewcommand{\algorithmicrequire}{\textbf{Input:}}
\renewcommand{\algorithmicensure}{\textbf{Output:}}
\begin{algorithmic}[1]
\REQUIRE Sparse matrix $A[M][N]$, dense matrix $B[N][K]$.
\ENSURE Dense matrix $C[M][K]$.
\FOR{$k = 0$ to $\lceil K/T_k \rceil - 1$}
    \STATE $kbound = \min((k+1)\cdot T_k, K-1)$
    \FOR{$i = 0$ to $\lceil M/T_i \rceil - 1$ \textbf{in parallel}}
        \FOR{$jj = \textit{act\_colSeg}[i]$ to $\textit{act\_colSeg}[i+1]$}
            \STATE $col = col\_idx[jj]$
            \FOR{$kk = k \cdot T_k$ to $kbound$ step $vecLen$}
                \STATE $rowSeg\_B = \texttt{load\_vector}(B[col][kk:kk+vecLen])$
                \FOR{$ii = colSeg\_ptr[jj]$ to $colSeg\_ptr[jj+1]$}
                    \STATE $row = row\_idx[ii]$, $val = values[ii]$
                    \STATE $A\_broadcast = \texttt{broadcast}(val)$
                    \STATE $rowSeg\_C = \texttt{load\_vector}(C[row][kk:kk+vecLen])$
                    \STATE $rowSeg\_C \mathrel{+}= A\_broadcast \times rowSeg\_B$
                    \STATE \texttt{store\_vector}$(C[row][kk:kk+vecLen], rowSeg\_C)$
                \ENDFOR
            \ENDFOR
        \ENDFOR
    \ENDFOR
\ENDFOR
\end{algorithmic}
\end{algorithm}
\subsubsection{Efficient SpMM Implementation}
\label{sec:SPMM}
Sparse matrix--dense matrix multiplication (SpMM) is a core operator in many high-performance computing and graph applications.
In our filtered subspace iteration framework, SpMM dominates the cost of the filtering step (line~\ref{ACJFEAST-Approximate_basis_mat} of Algorithm~\ref{alg:ACJFEAST}), where the polynomial filter $\rho(A)X$ is evaluated via Clenshaw's recurrence involving repeated multiplications of $A$ by a dense block of width $K$.
Given a sparse matrix $A\in\mathbb{R}^{M\times N}$ and a dense matrix $B\in\mathbb{R}^{N\times K}$, SpMM computes $C = AB$ with $C\in\mathbb{R}^{M\times K}$.
In practice, $A$ is commonly stored in compressed sparse row (CSR) format~\cite{saad2011numerical}, while $B$ and $C$ are typically stored in a dense row-major layout.
Conventional CSR SpMM suffers from poor cache reuse as $K$ grows: for each row of $A$, it repeatedly loads the same row segments of $B$ from memory.
Representative tile-based SpMM such as J-Stream~\cite{jstream} partitions the sparse matrix $A$ into row blocks of size $T_i$ and stores nonzeros within each row block in column-major order (column-segments).
This layout confines the access range of each column-segment to the dense matrix $C$ to the segment length, enabling reuse of the $C$ tile in cache.
However, J-Stream keeps the dense operands $B$ and $C$ in the conventional row-major layout; within each $C$-tile, accesses to the corresponding row segments become non-contiguous, degrading SIMD efficiency and cache utilization as $K$ increases~\cite{jstream}.
Fig.~\ref{fig:tilespmm} illustrates the storage formats and workflows of CSR-SpMM and J-Stream.

\begin{figure}[tb]
    \setlength{\abovecaptionskip}{2pt} 
    \setlength{\belowcaptionskip}{0pt} 
    \centering
    \includegraphics[width=0.9\linewidth]{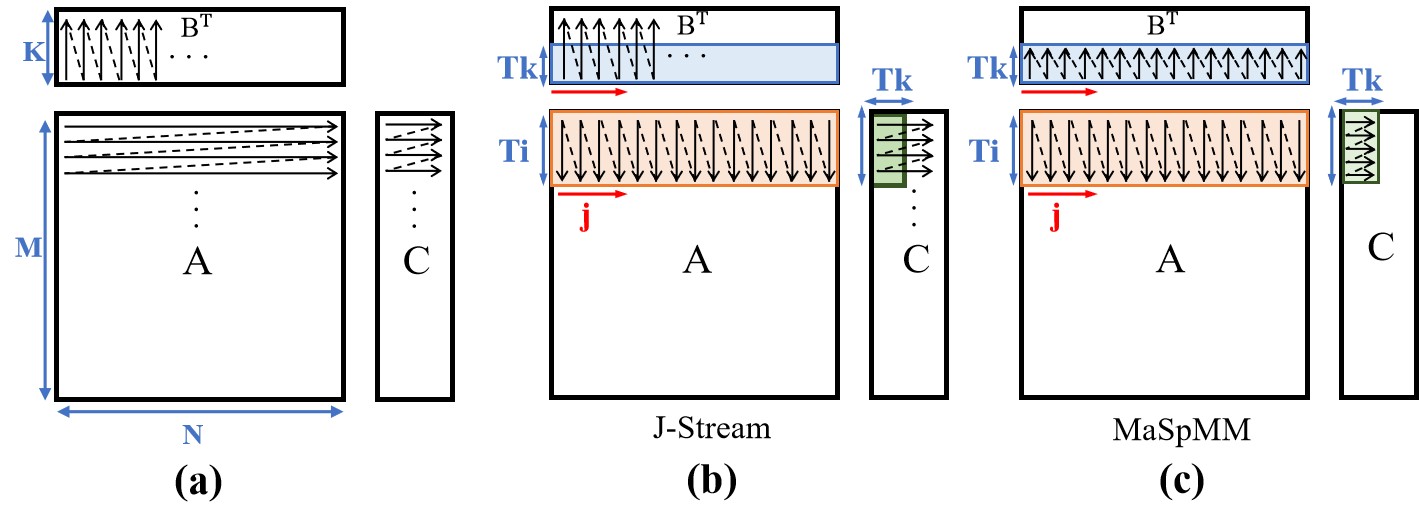}
    \caption{(a) The CSR SpMM;  (b) The J-Stream workflow. The \emph{column-segment} format is used in this work, in which each column-segment within a row block of $\mathbf{A}$ is stored contiguously; (c) The MaSpMM workflow. The \emph{row-segment} format is further adopted in this work, in which the dense matrix is partitioned into column blocks, and each row-segment within a column block is stored contiguously.}
    \label{fig:tilespmm}
\end{figure}

We use MaSpMM, a memory-aware SpMM microkernel designed to sustain high throughput as the dense dimension $K$ grows~\cite{MaSpMM}.
MaSpMM addresses the dense-side limitation by adopting a \emph{row-segment} layout: the dense matrix is partitioned into column blocks of width $T_k$, and within each column block the operands are stored in contiguous row segments.
By tuning $T_i$ and $T_k$, the $T_i\times T_k$ output $C$-tile can remain cache-resident during accumulation.
Algorithm~\ref{algo:mespmm} outlines the MaSpMM workflow.
The first-level loop iterates over dense column blocks (each of width $T_k$), and the second-level loop parallelizes over sparse row blocks (each of height $T_i$), so that each thread computes one $T_i\times T_k$ tile of $C$ and keeps it in cache.
Within each row block, nonzeros are organized as column-segments: each segment groups nonzeros that share the same column index of $A$ (Line 5), so that the corresponding row of $B$ can be loaded once and reused for all nonzeros in that segment (Line 7).
For each column-segment (column index \texttt{col}), MaSpMM loads the corresponding row segment of $B[\texttt{col}]$ once and reuses it to update the affected rows of the current $T_i\times T_k$ tile in $C$ via SIMD FMA (Lines 8-13).
This segment-oriented reuse improves dense-side locality and keeps the $C$ tile cache-resident during accumulation, helping MaSpMM sustain stable throughput as $K$ grows.

\subsubsection{Spurious Eigenvalues}

Algorithm \ref{alg:SI} adopts a simple convergence criterion: the iteration terminates once all Ritz values in the interval of interest are converged.
While this criterion is generally reliable, it is often overly conservative in practice.

To illustrate this issue, we consider the matrix \texttt{VCNT4000}\footnote{The matrix is taken from the ELSES library \cite{eles-matrix}, which consists of matrices arising from quantum mechanical problems.}  with the interval of interest set to \([-0.12,0.01]\).
The filter \eqref{AdaPolySI} is applied directly within Algorithm \ref{alg:SI}, and the iteration is terminated either when the relative residuals fall below \(10^{-10}\) or after at most 20 iterations.
Fig. \ref{fig:spurious_convergence} reports the distribution of unconverged Ritz values and the residuals of Ritz values inside the interval. 
As shown in Fig. \ref{fig:spurious_convergence}(a), a large number of Ritz values in the interval remain unconverged throughout the iteration. 
Fig. \ref{fig:spurious_convergence}(b) further indicates that the residuals of these Ritz pairs stay at the level of \(10^{-1}\). 

\begin{figure}[htbp!]
    \centering
     \includegraphics[width=0.8\linewidth]{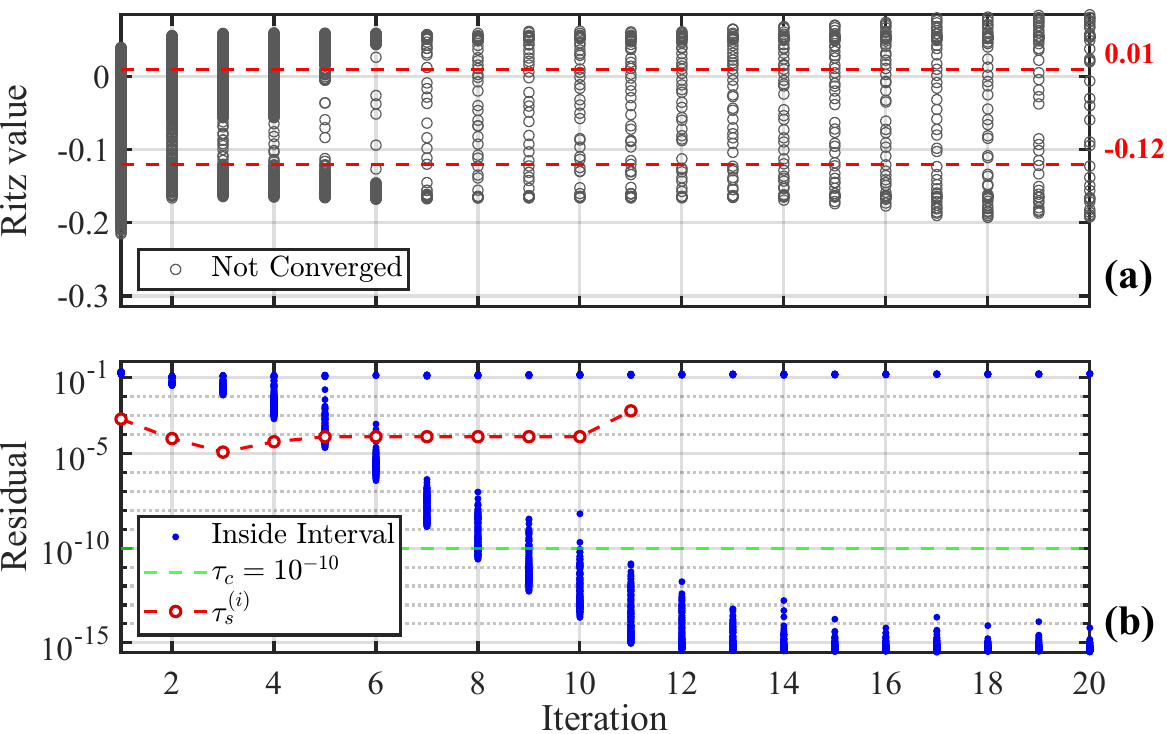}
    \caption{Ritz values and residuals for the VCNT4000 with interval $[-0.12,0.01]$.}
    \label{fig:spurious_convergence}
\end{figure}

In fact, all target eigenpairs have already satisfied the prescribed accuracy at the $11$-th iteration. However, the presence of spurious Ritz values inside the interval prevents the convergence criterion from being met. 


Several rational filtering-based methods attempt to estimate an accurate value of $e$ either prior to the iteration or during the iterative process \cite{peter2014feast, yin2019contour}. If the exact number of eigenvalues $e$ in the interval were known, one could simply arrange all Ritz pairs in descending order of their residuals and select the first $e$ pairs.
However, these approaches rely on a highly accurate filter and require $e$ to be chosen precisely: an overestimated value may prevent convergence, whereas an underestimated one may lead to incorrect convergence.
For this reason, these strategies are not suitable for the method proposed in this work. Instead, we propose an alternative approach.



By classical Rayleigh–Ritz theory \cite{demmel1997applied}, the error of a Ritz value is bounded by the norm of its residual.
Specifically, for each Ritz pair \((\hat\lambda,\hat v)\), there exists an exact eigenvalue \(\lambda\) of \(A\) such that
\[
 |\hat\lambda-\lambda|\le \|r\|,\qquad r:=A\hat v-\hat\lambda\hat v.
\]
Consequently, if a Ritz value \(\hat\lambda\in[a,b]\) corresponds to an exact eigenvalue \(\lambda\notin[a,b]\), then
\[
\|r\| \ge d_{[a,b]}(\hat\lambda) := \min\{|\hat\lambda-a|,|\hat\lambda-b|\}.
\]
For all Ritz values \(\hat\lambda_i\in[a,b]\), we define the threshold
\begin{equation}\label{tau_s}
    \tau_s^{(i)} := \min_j d_{[a,b]}(\hat\lambda_j^{(i)}),
\end{equation}
computed at the $i$-th iteration. This threshold ensures that any Ritz value whose residual falls below it will converge to an eigenvalue in the interval of interest.
Once eigenvalues begin to converge, Ritz pairs whose residual norms exceed \(\tau_s^{(i)}\) are excluded from the convergence test. Applying this strategy to the previous example, the algorithm terminates correctly at the $11$-th iteration. Further details of this strategy are provided in lines \ref{ACJFEAST-check} to 19 of Algorithm \ref{alg:ACJFEAST}.


\subsubsection{Deflation}

Due to the particular structure of the filter, eigenvalues located closer to the interior of the  interval of interest typically converge faster and attain smaller residuals than those near the interval boundaries. 
Without additional treatment, the residuals of already converged eigenpairs continue to decrease until reaching machine precision. 
However, this additional cost leads to redundant computations. 
To reduce computational cost, we incorporate an implicit deflation technique \cite{stewart1981simultaneous,saad2011numerical} into the algorithmic framework. 
By locking the converged Ritz values, the costs of both the filtering step and the Rayleigh–Ritz procedure in subsequent iterations can be reduced. 
The implementation of the deflation technique is demonstrated in lines \ref{ACJFEAST-orth} and \ref{ACJFEAST-lock1}-\ref{ACJFEAST-lock3} of Algorithm \ref{alg:ACJFEAST}.

It is worth noting that an overly loose convergence threshold (e.g., an absolute residual exceeding $10^{-8}$) may negatively impact the convergence of subsequent iterations.

\subsection{Other details}

To compute polynomial coefficients in the line \ref{ACJFEAST-Compute_CJ_cof} of Algorithm \ref{alg:ACJFEAST}, we estimate the largest and smallest eigenvalues of $A$ using a few iterations of the Lanczos algorithm \cite{demmel1997applied}.

As discussed in section \ref{Preliminaries}, to ensure the convergence of our algorithm, the subspace dimension must not be smaller than the number of eigenvalues in the interested interval, satisfying $p \geq e$. We estimate the approximate number of eigenvalues of \(A\) in the interval \([a,b]\), denoted by \(\tilde e\), using a trace estimation approach, and then set \(p=\lceil \mu \cdot\tilde e\rceil\) with \(\mu>0\). The key idea of trace estimation is to approximate the eigenvalue count in \([a,b]\) by computing the trace of an approximate spectral projector \(P_k\). The approach have been widely used for eigenvalue counting in related problems \cite{di2016efficient,jia2023feast}. In this work, the filter \eqref{AdaPolySI} with \(k^{(i)} = k\) is used as the projector in the trace estimation. 
It is also worth noting that the DOS method \cite{xi2018fast} provides an alternative way to estimate the number of eigenvalues.

In line \ref{ACJFEAST-orth} of Algorithm \ref{alg:ACJFEAST}, we employ the Cholesky-QR algorithm \cite{yamazaki2015mixed} for orthogonalization. 
Revisiting the partitioning scheme from Section \ref{sec:SPMM}, each process stores only $s$ consecutive rows of $X$. The entire algorithm executes in parallel, with only the first step requiring a small amount of communication. Consequently, when the condition number of $X$ is not excessively large $(\kappa(X) < 1/\varepsilon)$, Cholesky-QR is more suitable than Householder-QR algorithms for parallel computing environments.

\section{Numerical experiments}
\label{sec:Numerical_experiments}



In this section, we present a comprehensive experimental evaluation of the proposed algorithm. All experiments are designed to demonstrate the efficiency and robustness of our method (Algorithm \ref{alg:ACJFEAST}) compared to state-of-the-art methods. 

The methods under comparison include CJ-FEAST, originally proposed for SVD computations \cite{jia2023feast} and implemented here for Hermitian interior eigenvalue problems within Algorithm \ref{alg:ACJFEAST}, with the polynomial degree selected as in the original work. We also compare against the filtered subspace iteration method from the EVSL library \cite{li2019eigenvalues}, which employs a Chebyshev expansion of the Dirac delta function and determines the polynomial degree through additional constraints.



The experiments are based on the high performance implementation version (AdaPolySI) of our proposed algorithm~\cite{Yuhui2026AdaPolySI}, which has been implemented in the high-performance parallel algorithm library YHAMG \cite{yhamg2022,yuan2025cramg,ICS-MG}.
The software environment comprised the Kylin Linux operating system, MPICH-4.1.2, and the Intel 2023 compiler with the -Ofast optimization flag enabled. Intel MKL 2023.0 is used for mathematical library support.
To ensure reproducibility, each test is repeated five times with a fixed set of random seeds, and the average results are reported.


We define the maximum residual $\|R\| := \max_i\|A\hat v_i - \hat\lambda_i \hat v_i\|$ for $\hat\lambda_i \in [a, b]$.
Unless otherwise specified, the convergence criterion is set to
\[
    \|R\| \le \tau_c \cdot\|A\|,
\]
with \(\tau_c = 10^{-10}\). Here, \(\|A\|=\max\{|\lambda_{\min}|, |\lambda_{\max}|\}\), where \(\lambda_{\min}\) and \(\lambda_{\max}\) are estimated during the setup phase.

Throughout all experiments, our algorithm employs the $m$-th Lanczos damping coefficient with $m = 0.5$.  The adaptive threshold $\tau_a = 10^{-3}$, $C = 1.4,~k = \lceil 2.5 k^{(1)} \rceil$, and $\mu = 1.8$. These parameter values are selected based on preliminary testing to balance convergence speed and numerical stability.

\subsection{Sparse Matrices from real applications}\label{real_applications}
This section presents experimental results on sparse matrices selected from the \emph{SuiteSparse Matrix Collection}~\cite{TimDavis-Matrix}.
The key properties of these matrices are summarized in Table~\ref{tab:SpMatrices}. 
The fifth and sixth columns indicate the interval of interest and the number of eigenvalues computed, respectively. 
The five matrices are taken from the DFT framework. The interval is chosen as \([0.5n_0,1.5n_0]\), where \(n_0\) corresponds to the Fermi level of each Hamiltonian \cite{fang2012filtered}.
All experiments in this subsection were conducted using a single MPI process and 28 OpenMP threads on a single CPU socket.

We evaluate three methods for computing the desired eigenpairs: {Adaptive Polynomial Filtering (AdaPolySI), {CJ-FEAST}, and the {EVSL} subspace iteration method (EVSL-PSI). We note that EVSL also includes other algorithms, such as restarted Lanczos and rational filtering methods; however, in this comparison, we focus specifically on its \emph{polynomial filtering subspace iteration} method. 
The performance results are shown in Fig.~\ref{fig:Runtimes_MVs}, where we report the relative runtime and relative SpMVs of the three methods. The bar charts in the left panel additionally display the speedups of CJ-FEAST and AdaPolySI relative to EVSL-PSI. The detailed experimental results are summarized in Table~\ref{tab:Runtimes_MVs}, where AvgDeg denotes the average polynomial degree over all iterations, and Res represents the maximum residual at convergence.


The results in Fig.~\ref{fig:Runtimes_MVs} demonstrate that AdaPolySI achieves on average $\textbf{14.5}\times$ speedup over EVSL-PSI and $\textbf{2.14}\times$ over CJ-FEAST, with peak speedups of up to $25.9\times$ and $2.39\times$, respectively. Columns 3 and 4 of Table~\ref{tab:Runtimes_MVs} indicate that AdaPolySI achieves iteration counts comparable to CJ-FEAST while maintaining AvgDeg close to EVSL-PSI, resulting in significantly fewer SpMVs, as reflected in column 5.


 \begin{table}[t]
        \centering
        \caption{Sparse matrices from the SuiteSparse matrix collection.}
        \small
        \label{tab:SpMatrices}
        \begin{tabular}{llllccr}
        \toprule
        \textbf{ID} & \textbf{Matrix} & \textbf{Rows} & \textbf{\#nnz} & \textbf{Interval} & \textbf{\#evals} & \textbf{Ratio} \\
        \midrule
        1 & \texttt{Ge87H76} & 112,985 & 7,892,195 & $[-0.64, -0.0053]$  & 212 & 0.09\% \\
        2 & \texttt{Ge99H100} & 112,985 & 8,451,395 & $[-0.65, -0.0096]$  & 250 & 0.22\% \\
        3 & \texttt{Si41Ge41H72} & 185,639 & 15,011,265 & $[-0.64, -0.0028]$ & 218 & 0.12\% \\
        4 & \texttt{Si87H76} & 240,369 & 10,661,631 & $[-0.66, -0.0300]$ & 213 & 0.09\% \\
        5 & \texttt{Ga41As41H72} & 268,096 & 18,488,476 & $[-0.64, -0.0000]$ & 201 & 0.08\% \\
        \bottomrule           
        \end{tabular}
\end{table}

\begin{table}[!ht]
    \centering
    \small
    \caption{Summary of numerical results for AdaPolySI, CJ-FEAST, and EVSL-PSI.}
    \label{tab:Runtimes_MVs}
    \begin{tabular}{ccccccc}
    \toprule
        \textbf{Matrix} & \textbf{Method} & \textbf{AvgDeg} & \textbf{Iter} & \textbf{SpMVs} & \textbf{RunTime(s)} & \textbf{Res}  \\ \midrule
        ~ & AdaPolySI & 38  & 10  & 178783  & 24.4  & 9.8E-11  \\ 
        \texttt{Ge87H76} & CJ-FEAST & 167  & 7  & 439010  & 58.4  & 9.9E-11  \\ 
        ~ & EVSL-PSI & 31  & 111  & 445298  & 269.7  & 5.9E-12  \\ \hline 
        ~ & AdaPolySI & 37  & 10  & 200719  & 28.4  & 9.8E-11  \\ 
        \texttt{Ge99H100} & CJ-FEAST & 167  & 6  & 453724  & 56.3  & 9.9E-11  \\ 
        ~ & EVSL-PSI & 31  & 107  & 497261  & 331.0  & 5.0E-12  \\ \hline 
        ~ & AdaPolySI & 55  & 10  & 226664  & 54.3  & 9.8E-11  \\ 
        \texttt{Si41Ge41H72} & CJ-FEAST & 221  & 6  & 490648  & 108.2  & 9.8E-11  \\ 
        ~ & EVSL-PSI & 38  & 105  & 513410  & 752.8  & 5.5E-12  \\ \hline 
        ~ & AdaPolySI & 48  & 10  & 241880  & 55.5  & 9.8E-11  \\ 
        \texttt{Si87H76} & CJ-FEAST & 198  & 7  & 560851  & 111.3  & 9.6E-11  \\ 
        ~ & EVSL-PSI & 35  & 109  & 543923  & 550.7  & 4.5E-12  \\ \hline 
        ~ & AdaPolySI & 292  & 9  & 1117369  & 351.4  & 9.5E-11  \\ 
        \texttt{Ga41As41H72} & CJ-FEAST & 1000  & 7  & 2513615  & 809.5  & 9.9E-11  \\ 
        ~ & EVSL-PSI & 510  & 30  & 5123342  & 9087.0  & 3.8E-12  \\ \bottomrule
    \end{tabular}
\end{table}


The performance advantage of AdaPolySI over EVSL-PSI arises from several factors. 
First, AdaPolySI adopts a different filtering strategy; as illustrated in the right panel of Fig.~\ref{fig:Runtimes_MVs}, this significantly reduces the number of required SpMVs. 
Second, improvements in the algorithmic framework include a refined convergence detection mechanism that provides a more accurate assessment of the convergence status, helping avoid unnecessary iterations. 
Third, our implementations of both AdaPolySI and CJ-FEAST are based on MaSpMM implementation, whereas EVSL relies solely on sparse matrix-vector multiplication (SpMV). 
SpMM generally offers better data locality and memory access efficiency, making it faster than SpMV in practice.
Comparing with CJ-FEAST~\cite{jia2024augmented}, AdaPolySI update the order of polynomial adaptively, and therefore
requires much fewer SpMVs.


\begin{figure}[tb]
        \centering  
             \includegraphics[width=\linewidth]{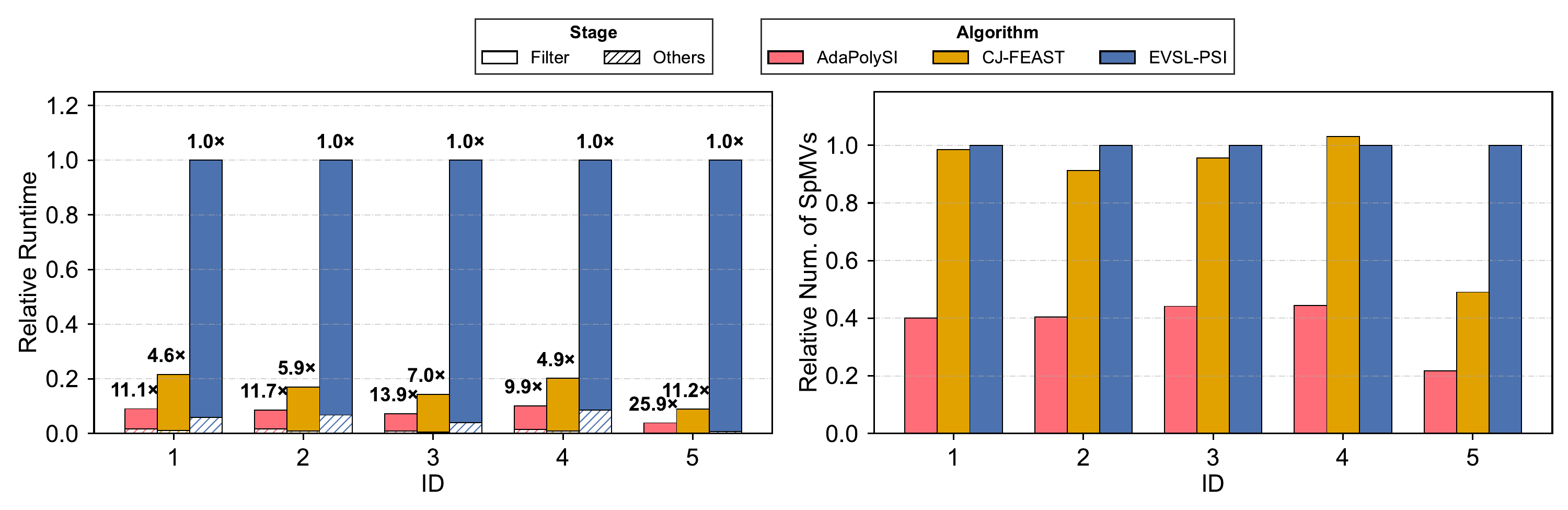} 
            \caption{The relative runtime and SpMVs of AdaPolySI and CJ-FEAST over EVSL. 
            }
            \label{fig:Runtimes_MVs}  
\end{figure}



\subsection{Parameter Sensitivity Analysis}\label{Par_Sensitivity}

The sensitivity of AdaPolySI with respect to the parameters \( m \), \( \tau_a \) and \(k\) is analyzed. We use the test cases with matrices \texttt{Ga41As41H72} and \texttt{Ge87H76} from Table \ref{tab:SpMatrices} for this evaluation. All experiments are performed using a single MPI process with 28 OpenMP threads.

To examine the effect of \( m \), we test AdaPolySI over the range \( m \in [0, 3] \) with a step size of $0.1$.
The results are shown in Fig. \ref{fig:mu_tola_sensitivity}(a). The runtime remains nearly unchanged when \( m \) is small, but gradually increases as \( m \) becomes larger. This indicates that choosing a small value of \( m \) leads to similar-and generally better—performance compared with using larger values.  

For the adaptive parameter that determines the polynomial degree, we evaluate $\tau_a$ over the range $10^{-4}$--$10^{-1}$, and the results are shown in Fig.~\ref{fig:mu_tola_sensitivity}(b).
We observe that the performance is largely insensitive to $\tau_a$, and the overall runtime remains nearly unchanged as long as $\tau_a$ is sufficiently small, e.g., less than $0.01$.
As $\tau_a$ increases, the average degree decreases, which in turn leads to a larger number of iterations.
This results in additional computational costs in the orthogonalization and Rayleigh--Ritz stages.
In particular, when $\tau_a = 0.5$, the time spent in the “Others” stage increases significantly, rendering the adaptive strategy ineffective.
Based on these observations, we set $m = 0.5$ and $\tau_a = 10^{-3}$.

To better illustrate the advantage of the adaptive strategy, we evaluate how algorithms with adaptive and fixed filters are affected by the choice of the initial polynomial degree $k$. 
We compute \(k^{(1)}\) using \eqref{compute_k_1}, set \(k=\lceil 2.5 k^{(1)} \rceil\), and define \(C_{\mathrm{r}} = \frac{C}{\alpha - \beta}\). We consider 13 values of \(C_{\mathrm{r}}\) in the range \([10, 120]\).

Fig.~\ref{fig:Degree_sensitivity} shows the runtime for different values of \(C_{\mathrm{r}}\), together with the average degree per iteration. Comparing Fig.~\ref{fig:Degree_sensitivity}(a) and (b), we observe that adaptive filtering is much less sensitive to the choice of \(k\) than fixed-degree filtering. Taking \texttt{Ge87H76} as an example, the variance of the runtime is 72.76 for the fixed-degree approach, whereas it is only 5.61 for the adaptive strategy. This indicates that the initial degree selection has a relatively minor impact on the performance of adaptive filtering. Moreover, the average degrees produced by the adaptive strategy remain very close across different settings, suggesting that adaptive filtering improves algorithmic robustness compared with fixed-degree filtering.


\begin{figure}[tb]
    \centering
    \includegraphics[width=\linewidth]{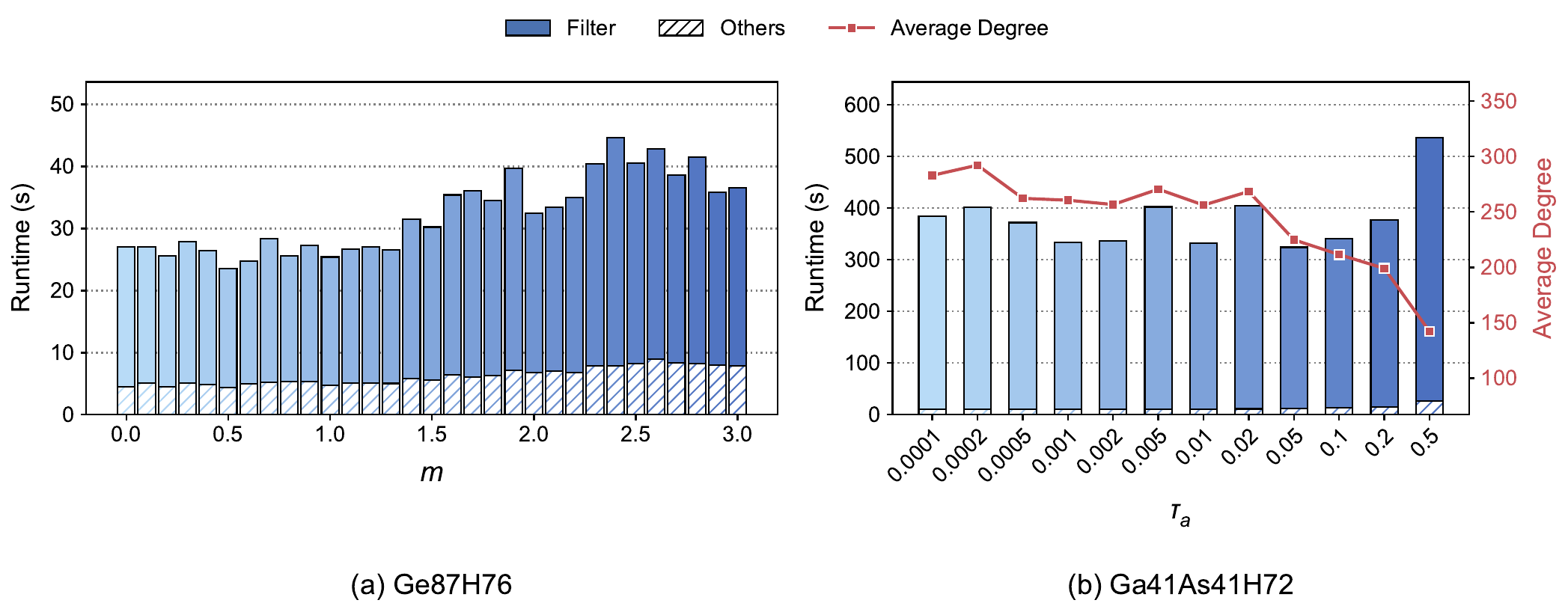}
    \caption{Runtime and average degree of AdaPolySI for different values of $m$ and $\tau_a$.}
    \label{fig:mu_tola_sensitivity}
\end{figure}

\begin{figure}[tb]
    \centering
    \includegraphics[width=\linewidth]{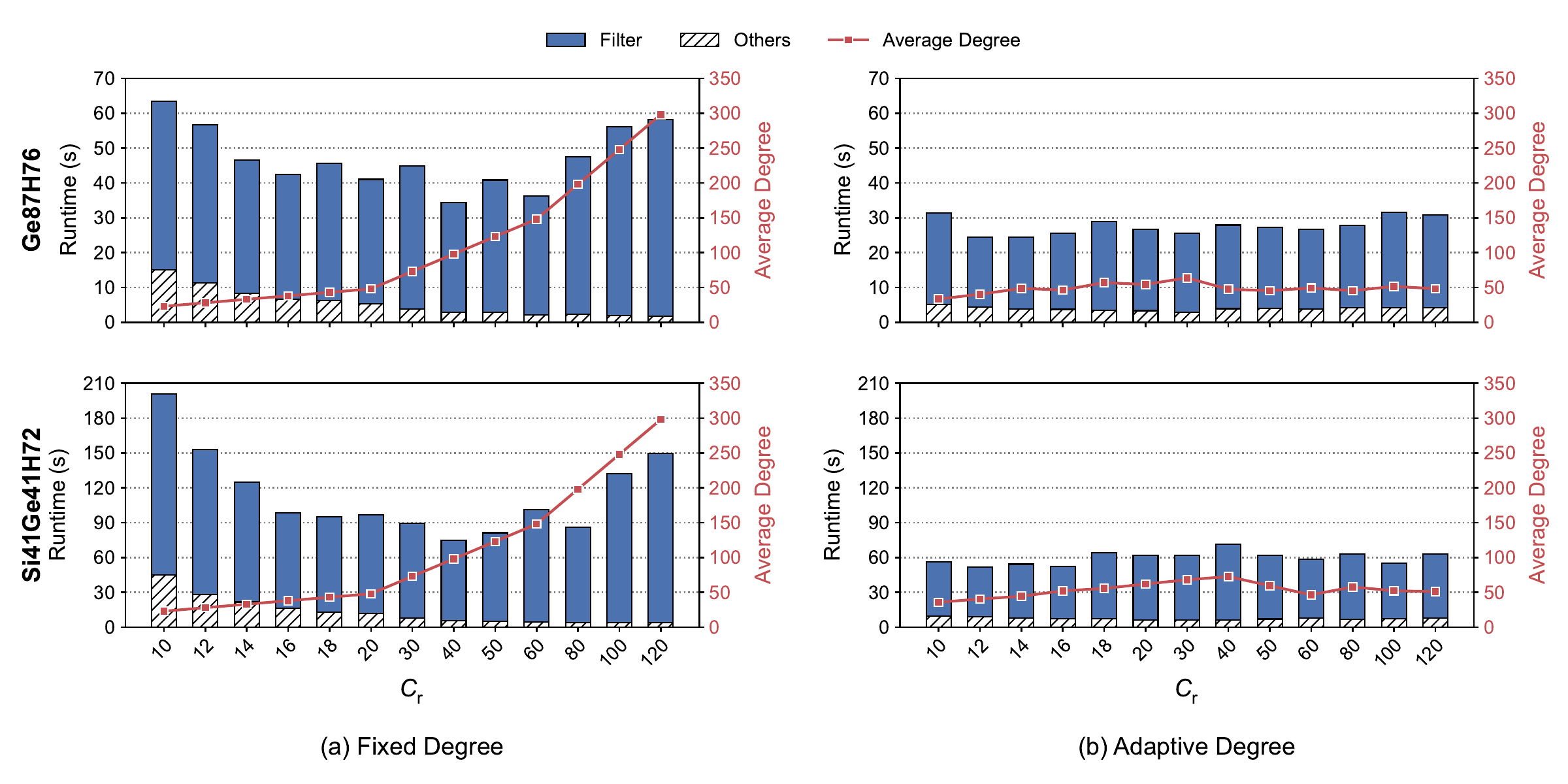}
    \caption{Runtime and average polynomial degree for varying \(C_{\mathrm{r}}\). Bars show the runtime components, and the dashed curve denotes the average degree per iteration.}
    \label{fig:Degree_sensitivity}
\end{figure}

\section{Conclusions} \label{Conclusions}

We propose an adaptive polynomial filter \eqref{AdaPolySI}, constructed via the Chebyshev expansion of the step function \eqref{step_func}, and integrate it into a subspace iteration framework to solve the Hermitian interior eigenvalue problem \eqref{ori_problem}.
We establish pointwise convergence bounds for the filtering function in both the undamped and damped settings \eqref{eq:damped_fourier_error_bound}. These results show that the error depends on $1/k$ and $J(\theta)$, and is generally inversely related to the width of the target interval.
In addition, we incorporate implicit deflation and spurious eigenvalue detection into the filtered subspace iteration framework: the former reduces redundant computations associated with converged eigenpairs, while the latter ensures correct convergence in the presence of spurious Ritz values. At the implementation level, we employ MaSpMM~\cite{MaSpMM}, a memory-aware sparse matrix--dense matrix multiplication microkernel, to accelerate the polynomial filtering step.


Numerical experiments provide a comprehensive evaluation of AdaPolySI. Section~\ref{real_applications} shows that, in a shared-memory setting, AdaPolySI achieves speedups of up to $2.39\times$ over CJ-FEAST and $25.9\times$ over the PSI solver in EVSL. Section~\ref{Par_Sensitivity} further presents a systematic sensitivity analysis with respect to $m$, $\tau_a$, and $k$, demonstrating robustness to parameter variations within their prescribed ranges.



In future work, we aim to extend the proposed adaptive filtering strategy to other eigenvalue frameworks, such as filtered Lanczos methods.

\bibliographystyle{siamplain}
\bibliography{references}

\end{document}